\documentclass[11pt,a4paper,twoside]{article}


\usepackage{ifthen} 
\newboolean{ShowLabels}
	\setboolean{ShowLabels}{false}    	 

\usepackage[utf8]{inputenc}
\usepackage[T1]{fontenc}
\usepackage{lipsum} 				
\usepackage{amsmath} 				
\usepackage{amssymb}				
\usepackage{cases} 					
\usepackage{amsthm} 				
\usepackage{amsfonts} 				
\usepackage{microtype} 				
\usepackage{lmodern}				
\usepackage{calrsfs} 				
\usepackage[dvips]{graphicx} 	 	
\usepackage{xcolor} 				
\usepackage{soul}					
\usepackage{pdfcomment} 			
\usepackage[nameinlink]{cleveref}	
\usepackage[a4paper,left=2.5cm,right=2.5cm,top=3cm,bottom=3cm]{geometry}	
\usepackage{lmodern} 				
\usepackage{enumitem} 				
\usepackage{titletoc} 				
\usepackage{fancyhdr} 				
\usepackage{datetime} 				

	\definecolor{blue}{rgb}{0,0,.6}
	\definecolor{green}{rgb}{0,.4,0}


	\hypersetup{
		bookmarksopen=true,
		colorlinks=true, 
		urlcolor=black,  
		linkcolor=blue,  
		citecolor=blue   
	}


		\setlength{\headheight}{13.59999pt}
		\addtolength{\topmargin}{-1.59999pt}

		\fancypagestyle{TitleStyle}
		{
		\fancyhf{}
		
		\fancyfoot[C]{
				\thepage
			}
		}

		\fancypagestyle{RegularStyle}{
		\fancyhf{} 
		\fancyhead[CE]{
				\footnotesize Samuel \textsc{Tréton} and Mingmin {\sc Zhang}
			}
		\fancyhead[CO]{
				\footnotesize The influence of an inward-shifting boundary on the heat equation in half-space
			}
		\fancyfoot[C]{
				\thepage
			}
		}

		\pagestyle{RegularStyle}

	\setcounter{tocdepth}{1}	

	\titlecontents{section}
		[6mm]
		{}
		{\contentslabel{6mm}}
		{}
		{\dotfill\contentspage}

	\ifthenelse{\boolean{ShowLabels}}{
		\usepackage[notcite,notref]{showkeys}
		
		}{
		}

		\DeclareMathAlphabet{\pazocal}{OMS}{zplm}{m}{n}
		\renewcommand{\mathcal}[1]{\pazocal{#1}}
		
		\newcommand{\Lb}{\pazocal{L}}

		\newcommand{\Sb}{\pazocal{S}}
		\newcommand{\md}{\mathrm{d}}

		\newtheoremstyle{mystyle}
		{2ex} 
		{2ex} 
		{\itshape} 
		{} 
		{\bfseries} 
		{} 
		{1em} 
		{} 

	\theoremstyle{mystyle}

		\newtheorem{theorem}{Theorem}[section]
		\newtheorem{lemma}[theorem]{Lemma}
		\newtheorem{proposition}[theorem]{Proposition}

		\newtheorem{remark}[theorem]{Remark}

	\crefname{theorem}{theorem}{theorems}
	\Crefname{theorem}{Theorem}{Theorems}

	\crefname{lemma}{lemma}{lemmas}
	\Crefname{lemma}{Lemma}{Lemmas}

	\crefname{proposition}{proposition}{propositions}
	\Crefname{proposition}{Proposition}{Propositions}

	\crefname{corollary}{corollary}{corollaries}
	\Crefname{corollary}{Corollary}{Corollaries}

	\crefname{conjecture}{conjecture}{conjectures}
	\Crefname{conjecture}{Conjecture}{Conjectures}

	\crefname{definition}{definition}{definitions}
	\Crefname{definition}{Definition}{Definitions}

	\crefname{assumption}{assumption}{assumptions}
	\Crefname{assumption}{Assumption}{Assumptions}

	\crefname{remark}{remark}{remarks}
	\Crefname{remark}{Remark}{Remarks}

	\crefname{example}{example}{examples}
	\Crefname{example}{Example}{Examples}

	\crefname{claim}{claim}{claims}
	\Crefname{claim}{Claim}{Claims}

	\crefname{equation}{equation}{equations}
	\Crefname{equation}{Equation}{Equations}

	\crefname{section}{section}{sections}
	\Crefname{section}{Section}{Sections}

	\crefname{subsection}{subsection}{subsections}
	\Crefname{subsection}{Subsection}{Subsections}

	\renewenvironment{proof}[1][\proofname]{\par\noindent\textbf{#1.} }{\qed}

	\numberwithin{equation}{section}

	\newcounter{FIGURE}
	\renewcommand{\theFIGURE}{\Roman{FIGURE}}
	\setcounter{FIGURE}{0}

	\newcommand{\croch}[1]{[#1]}						
	\newcommand{\Croch}[1]{\left[#1\right]}				
	
	\newcommand{\Prth}[1]{\left(#1\right)}				
	\newcommand{\prth}[1]{(#1)}							
	
	\newcommand{\glmt}[1]{``#1''}						
	
	\newcommand{\acco}[1]{\lbrace#1\rbrace}				
	
	\newcommand{\prtH}[1]{\big(#1\big)}					
	\newcommand{\prtHH}[1]{\Big(#1\Big)}				
	\newcommand{\prtHHH}[1]{\bigg(#1\bigg)}				
	
	\newcommand{\crocHH}[1]{\Big[#1\Big]}				
	\newcommand{\crocHHH}[1]{\bigg[#1\bigg]}			
	

	\newcommand{\verti}[1]{\left\vert #1 \right\vert}
	\newcommand{\vertii}[2]{\Vert #1 \Vert_{#2}}
	\newcommand{\Vertii}[2]{\left\Vert #1 \right\Vert_{#2}}
	
	\newcommand{\vertI}[1]{\big\vert#1\big\vert}
	\newcommand{\vertII}[1]{\Big\vert#1\Big\vert}
	\newcommand{\vertIII}[1]{\bigg\vert#1\bigg\vert}

	\newcommand{\intervalleff}[2]{[#1,#2]}
	\newcommand{\intervallefo}[2]{[#1,#2)}
	\newcommand{\intervalleoo}[2]{(#1,#2)}
	\newcommand{\intervalleof}[2]{(#1,#2]}

	\newcommand{\gap}{1mm} 		
	\newcommand{\gapp}{1mm} 	
	
	\newcommand{\point}{\includegraphics[scale=1]{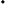}}

		\newcommand{\TITLE}{A piston to counteract diffusion: \break The influence of an inward-shifting boundary \break on the heat equation in half-space}
		\title{\textbf{\TITLE}}
		\edef\mytitle{\TITLE}
		\hypersetup{pdftitle={\mytitle}}

		\newdateformat{monthyeardate}{\monthname[\THEMONTH] \THEYEAR} 
		\date{\vspace{0.8mm}\itshape\monthyeardate\today} 				

		\author{%
    \href{https://www.samueltreton.fr/english/}{Samuel {\sc Tréton}}\thanks{%
        CNRS, UMR 6205, 
        \href{https://www.univ-brest.fr/laboratoire-mathematiques-bretagne-atlantique/en}{Laboratoire de Mathématiques de Bretagne Atlantique (LMBA)},\\
		\phantom{\hspace{4.5mm}}Université de Bretagne Occidentale, 6 avenue Victor Le Gorgeu, 29200 Brest, France.\\[-2.5mm]
    }%
    \and
    \href{https://sites.google.com/view/mingminzhang/home}{Mingmin {\sc Zhang}}\thanks{%
        CNRS, UMR 5219,
        \href{https://www.math.univ-toulouse.fr/fr/}{Institut de Mathématiques de Toulouse (IMT)},\\
		\phantom{\hspace{4.5mm}}Université de Toulouse, F-31062 Toulouse Cedex 9, France.\\[-5.5mm]
    }
}
		\hypersetup{pdfauthor={Samuel Tréton and Mingmin Zhang}}

\makeatletter
\renewcommand{\@fnsymbol}[1]{\ensuremath{\ifcase#1\or 1\or 2\or 3\or 4\or 5\or 6\or 7\or 8\or 9\else\@ctrerr\fi}}
\makeatother

%
%
\begin{document}

\maketitle
\thispagestyle{TitleStyle} 

\setcounter{footnote}{2} 

\vspace{-4.5mm}

\begin{abstract}
	\vspace{1.3mm}
	To better understand how populations respond to dynamic external pressure, we propose a new diffusion model in the moving half-line $\{z\geq b(t)\}$, where the boundary position $b\prth{t}$ is a given nondecreasing function of time.
	A Robin boundary condition is imposed at $z=b(t)$ to prevent individuals from leaving the domain, so that the shifting boundary acts as an impermeable wall---a \glmt{piston}---that sweeps the individuals it encounters.
	Our analysis focuses on the cases where $b(t)\sim ct^\beta$ with $\beta\in\intervalleff{0}{1}$.
	We prove quantitative convergence results characterized by attraction toward self-similar profiles, based on entropy techniques and Duhamel's principle.
	When $\beta$ goes through the critical value $1/2$, the shape of the self-similar asymptotic profile switches from Gaussian to exponential. In particular, this profile turns out to be stationary when $\beta=1$, reflecting a delicate balance between diffusion and advection induced by the moving boundary.
\end{abstract}

\vspace{2mm}

\noindent{\textbf{Keywords:}
        moving boundary,
		Fokker--Planck equations,
        asymptotic behavior,
		self-similar rescaling,
        entropy method,
        semigroup approach.
		}
\hypersetup{pdfkeywords={moving boundary, Fokker–Planck equations, asymptotic behavior, self-similar rescaling, entropy method, semigroup approach.}}

\vspace{2.5mm}

\noindent{\textbf{MSC2020:}
		\pdftooltip{35K05}{PDEs/Parabolic/heat Equation},
		\pdftooltip{35B40}{PDEs/Qualitative properties/Asymptotic behavior},
		\pdftooltip{35B33}{PDEs/Qualitative properties/Critical exponent},
		\pdftooltip{92D25}{Biology and other natural sciences/Population dynamics (general)}.
		}


\hypertarget{toc}{\tableofcontents}

\section{Introduction}\label{S1_intro}

\subsection{The model}

In this paper, we are interested in the following diffusion equation posed on the moving half-line
$\acco{z \geq b\prth{t}}$:
\begin{equation}\label{EQ_global_problem}
	\left\lbrace \begin{array}{llll}
		\partial_{t}u = d\partial_{zz} u, & \qquad &
		t>0, & z>b\prth{t},\\[0.9mm]
		-d\partial_{z}u = b'\prth{t}u , & \qquad &
		t>0, &z=b\prth{t},
	\end{array} \right .
\end{equation}
associated with bounded, compactly supported, and nonnegative initial data $u_{0}$.
Here, the constant $d>0$ denotes the diffusivity rate, and the position of the moving boundary $z = b\prth{t} \in C^{1}\prth{\mathbb{R}_{+}}$ is chosen as a nondecreasing function vanishing at $t=0$, representing an inward-shifting obstacle that sweeps individuals in the context of population dynamics.

\refstepcounter{FIGURE}
\label{FIG_1}
\begin{center}
\includegraphics[scale=1]{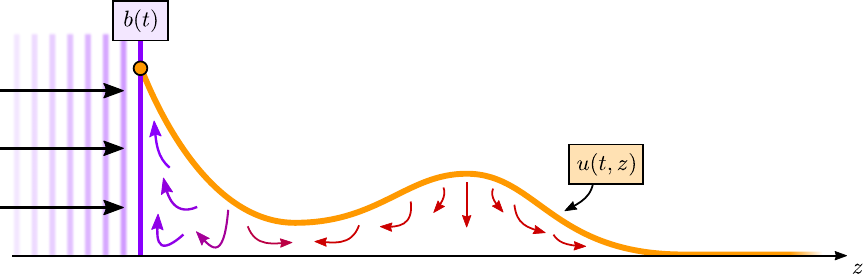}\\[1.5mm]
\begin{minipage}{133mm}
\begin{footnotesize}
	\textsc{Figure \theFIGURE~--- Variational trends induced by the moving boundary in equation \eqref{EQ_global_problem}.}
	\emph{Far from the boundary $z=b(t)$, individuals spread according to the diffusion process driven by the Laplacian (red arrows).
	Near the boundary, individuals are blocked and swept to the right, leading to concentration of population in this region (purple arrows).
	}
\end{footnotesize}
\end{minipage}
\end{center}

In \eqref{EQ_global_problem}, the Robin boundary condition ensures the conservation of mass within the moving domain $\acco{z \geq b\prth{t}}$. Indeed, defining the total mass at time $t\geq 0$ as
$${M\prth{t} : = \int_{b\prth{t}}^{\infty} u\prth{t,z} \, \md z},$$
and differentiating with respect to time yields
$$
0 = M'\prth{t} = 
- b'\prth{t} u\prth{t,b\prth{t}} - d \partial_{z}u\prth{t,b\prth{t}}.
$$
This boundary condition precisely compensates for the mass flux induced by the motion of the domain, thereby maintaining the physical consistency of the PDE dynamics.
In this sense, the Robin condition plays a corrective role by balancing the inward diffusive flux $-d\partial_{z}u$ with the effective transport flux $b'(t)u$ resulting from the boundary motion.
Throughout the manuscript, the constant $M$ thus denotes to the total mass of the population:
\begin{equation}\label{DEF_M}
	M : = \int_{0}^{\infty}u_{0}(z) \, \md z.
\end{equation}

In this paper, we aim to understand the asymptotic behavior of the solution with respect to the speed $b'(t)$ of the moving boundary.
Specifically, we consider an algebraic form for $b\prth{t}$, that is,
\begin{equation}\label{EQ_def_algebraic_b}
	b\prth{t} : = c\croch{\prth{1+t}^{\beta} - 1},
\end{equation}
for some constants $c>0$ and $\beta\geq 0$.
Observe that $b$ vanishes at $t=0$, its derivative $b'$ is locally bounded on $\mathbb{R}_{+}$, and $b\prth{t}$ behaves like $ct^{\beta}$ as $t\to\infty$.

To build intuition about this algebraic choice, let us first consider the simple case $\beta=0$, corresponding to the static case $b\equiv 0$ and thus to the classical heat equation on $\mathbb{R}_{+}$ with homogeneous Neumann boundary condition at $z=0$:
\begin{equation}\label{EQ_global_problem_Neumann}
\left\lbrace \begin{array}{llll}
	\partial_{t}u = d\partial_{zz} u, & \qquad &
	t>0, & z>0,\\[0.9mm]
	-d\partial_{z}u = 0 , & \qquad &
	t>0, &z=0.
\end{array} \right .
\end{equation}
By classical reflection arguments \cite{EvansPartial10}, the solution to \eqref{EQ_global_problem_Neumann} can be written as
\begin{equation}\label{EQ_HEAT_Neumann}
	u\prth{t,z} = \int_{\xi=0}^{\infty} H\prth{t,z,\xi} \, u_{0}\prth{\xi} \, \md \xi,
\end{equation}
where the fundamental solution $H$ is given by
\begin{equation}\label{EQ_fundamental_solution_Neumann}
	H\prth{t,z,\xi} := \frac{1}{\sqrt{4\pi dt}}\Croch{\exp\Prth{-\frac{\prth{z-\xi}^{2}}{4dt}}+\exp\Prth{-\frac{\prth{z+\xi}^{2}}{4dt}}}.
\end{equation}

From a microscopic perspective, the function $z\mapsto H(t,z,\xi)$ can be interpreted as the probability density function describing the position $Z_{t}$ of a single individual at time $t$, starting from the initial position $\xi\geq 0$.
This stochastic interpretation reflects the diffusion process at the individual scale, which, when aggregated over the entire population, yields the macroscopic diffusion equation \eqref{EQ_global_problem_Neumann} governing the population density.

By focusing on an individual starting from $\xi=0$, we can then compute its expected position as a function of time:
\begin{equation}
	\mathbb{E}\croch{Z_{t}} =
	\int_{z=0}^{\infty} z \, H\prth{t,z,0} \, \md z
	\stackrel{\eqref{EQ_fundamental_solution_Neumann}}{=}
	\frac{2}{\sqrt{4\pi dt}} \int_{z=0}^{\infty} z \, e^{-\frac{z^{2}}{4dt}} \, \md z =
	k \sqrt{t},\label{EQ_expectation}
\end{equation}
for some positive constant $k$.
The calculation \eqref{EQ_expectation} can be generalized to any initial position $\xi\geq 0$, and shows that the mean displacement of individuals increases proportionally to $\sqrt{t}$---which is characteristic of standard diffusive dispersal \cite[Chapter 1, Section 5]{CantrellSpatial04}.

The above observation suggests that the moving boundary $b(t)$ must advance at a rate comparable to or faster than $\sqrt{t}$ to significantly influence the population distribution.
Indeed, if the boundary moves at a slower rate, the typical scale of individual displacement eventually exceeds the boundary’s advance, resulting in only a weak interaction over time%
\footnote{Note that Brownian motion is recurrent in one dimension, meaning that individuals always have a nonzero probability of returning near the boundary. In particular, even when the boundary advances slowly, interaction with it persists at the individual level. However, as time grows, a larger fraction of the population spreads beyond the boundary’s influence, making the interaction effectively \glmt{weak} at the population scale.},
see \hyperref[FIG_Brownian_1]{Figure~\ref*{FIG_Brownian_1}~(a)(b)}.
Conversely, if the boundary moves at a faster rate than $\sqrt{t}$, then individuals remain confined near it, thus maintaining a strong interaction over time\footnote{In this regime (where the boundary outpaces the diffusion scaling) an interesting probabilistic question would be to quantify how the number of particle-boundary collisions grows over time.}, see \hyperref[FIG_Brownian_1]{Figure~\ref*{FIG_Brownian_1}~(c)}.

This reasoning motivates the choice of an algebraic boundary $b(t)\sim ct^{\beta}$ as $t\to\infty$, in line with \eqref{EQ_def_algebraic_b}.
By varying the exponent $\beta$, we can thus explore different regimes of boundary movement relative to the diffusive spread of the population.

\renewcommand{\gap}{1}
\refstepcounter{FIGURE}
\label{FIG_Brownian_1}
\begin{center}
	\includegraphics[scale=\gap]{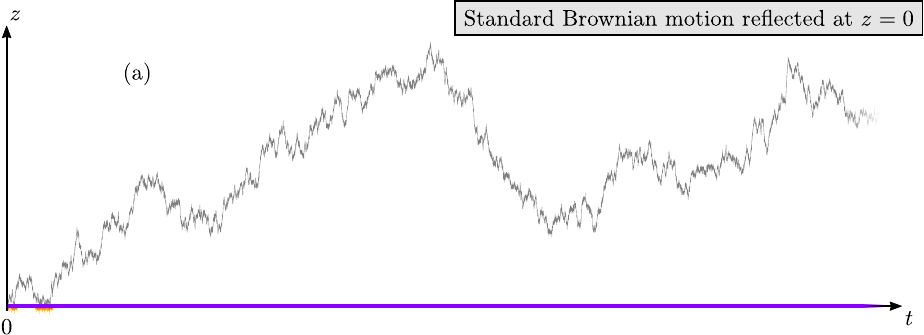}\\[3mm]
	\includegraphics[scale=\gap]{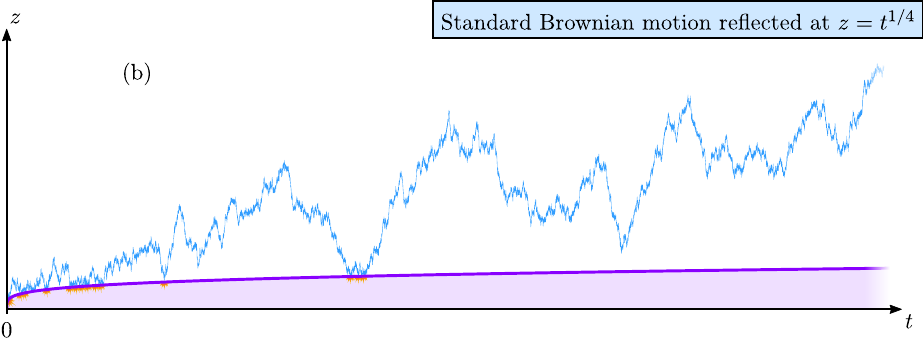}\\[3mm]
	\includegraphics[scale=\gap]{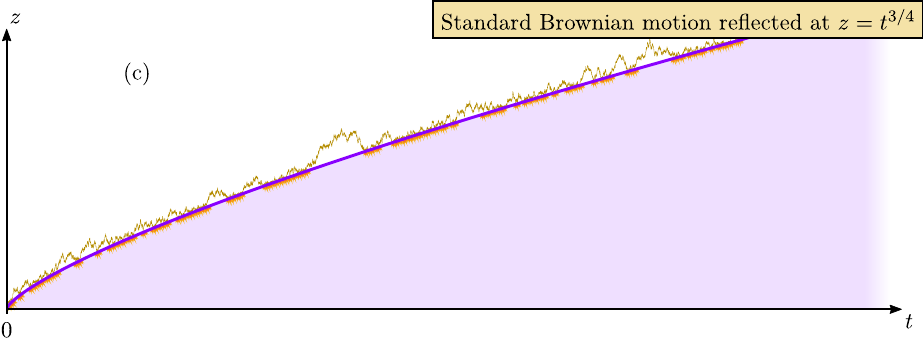}\\[1.5mm]
	\begin{minipage}{140mm}
	\begin{footnotesize}
	\textsc{Figure \theFIGURE~--- Illustration of typical microscopic trajectories for problem \eqref{EQ_global_problem} under different boundary speeds: static \textnormal{(a)}, slower than $\sqrt{t}$ \textnormal{(b)}, and faster than $\sqrt{t}$ \textnormal{(c)}.}
	\emph{
	Each simulation shows the trajectory of a single individual initially located at $z=0$, undergoing one-dimensional Brownian motion reflected at the moving boundary $z=b(t)$ (purple lines).
    The white regions above the purple lines correspond to the domain of interest $\{z\geq b(t)\}$.
	In cases \textnormal{(a)} and \textnormal{(b)}, interaction with the boundary remains weak, since the typical diffusive displacement scales like $\sqrt{t}$.
	In contrast, in case \textnormal{(c)}, frequent collisions with the boundary occur, leading to significant deviations in the trajectory.
	}
	\end{footnotesize}
	\end{minipage}
\end{center}

\subsection{Mathematical background and biological context}

Since the seminal works of Fisher \cite{FisherWave37} and Kolmogorov, Petrovski, and Piskunov \cite{KolmogorovStudy37} in the 1930s, reaction--diffusion equations have received significant attention within the mathematical community. They are valued not only for their extensive applications across diverse fields such as ecology, epidemiology, and social sciences, but also for their relative accessibility in analyzing certain qualitative behaviors.

Over the last few decades, a rich literature has developed around reaction--diffusion equations in moving environments (e.g.,
\cite{AlfaroEffect17,BerestyckiCan14,BerestyckiCan09,BerestyckiReactiondiffusion08,BouhoursSpreading19,FigueroaIglesiasLong18,FigueroaIglesiasSelection21,RoquesAdaptation20}).
In these models, species typically track habitats moving at a constant speed---a widely adopted paradigm for modeling phenomena such as poleward species shifts under climate change.
Environmental changes are usually incorporated either through an advection term, resulting in equations of the form
$$\partial_{t}u = \partial_{zz}u - c\partial_{z} u + f(z,u),$$
or by evaluating the growth term in a co-moving frame, namely $f(z - ct, u)$.

In contrast, our contribution departs from the classical paradigm in two key respects.
First, we prescribe algebraically growing boundary $b(t)=c\croch{\prth{1+t}^{\beta} - 1}$ which, to the best of our knowledge, has not been explored in the literature.
Unlike the constant-speed shifts of earlier models, this time-dependent boundary induces accelerating or decelerating boundary motion, offering new insights into population adaptation under time-varying spatial constraints.
Second, our work belongs to the emerging framework of dynamically prescribed domains $\Omega(t)$. This formulation has only recently gained attention, notably through the works of Allwright \cite{AllwrightExact22,AllwrightReaction23}, where reaction--diffusion equations were studied in time-dependent intervals with homogeneous Dirichlet boundary conditions.
Here, we move to a qualitatively different geometry by considering a time-dependent half-space\footnote{Owing to its fundamental structure, the half-space is the only connected open subset of $\mathbb{R}$ possessing a single boundary point. This property eliminates the interference that would arise from the presence of a second boundary, making the half-space a natural setting for isolating and quantifying population--boundary interactions.} and focus on the purely diffusive case, a combination that appears absent from the existing literature.

Moreover, our setting fundamentally differs from free boundary problems, where the domain evolution is governed by the solution itself through interface conditions or conservation laws. Instead, in our case, the boundary motion is externally prescribed---fully independent of the solution---and thus acts as an exogenous constraint on the population.

Altogether, our framework gives rise to qualitatively distinct dynamics: species must cope with a nonlinearly receding environment rather than follow a linearly translating habitat.
Far from being purely theoretical, these dynamics have concrete counterparts in real-world ecological settings.
In the context of population dynamics, our model can represent environments evolving under disruptive influences such as forest fires \cite{MalhiEcological04}, rising sea levels \cite{LoucksSea10,MenonPreliminary10}, melting ice \cite{MooreArctic08}, and declining snow cover \cite{CarrerRecent23}---all of which progressively erode the space available to biological communities.
In such scenarios, both the geometry and the location of the habitable space evolve over time, often in nontrivial ways.
Our setting, centered on a single moving boundary, captures the essence of these spatial constraints while remaining analytically tractable.
It thus provides an insightful theoretical lens for understanding the survival and dispersal challenges faced by species in receding habitats.


\section{Main results}\label{S2_main_results}

By performing the change of variable
\begin{equation}\label{EQ_change_of_var_u_to_v}
	v\prth{t,x} : = u\prth{t,x+b\prth{t}},
\end{equation}
we reformulate \eqref{EQ_global_problem} as the following problem
\begin{equation}\label{v-eqn}
	\left\lbrace \begin{array}{llll}
		\partial_{t}v = d\partial_{xx} v + b'\prth{t}\partial_{x}v, & \qquad &
		t>0, & x>0,\\[0.9mm]
		-d\partial_{x}v = b'\prth{t}v, & \qquad &
		t>0, & x=0,
	\end{array} \right .
\end{equation}
posed on the static half-space $\mathbb{R}_{+}$, with initial condition
$v|_{t=0} = u_{0} = : v_{0}$.
From now on, the discussion is framed entirely in terms of the transformed variable $v$ rather than the original function $u$.

Assuming $v_0$ is bounded, compactly supported, and nonnegative in $\mathbb{R}_{+}$---or more generally belongs to $L^1\prth{\mathbb{R}_{+}} \cap L^\infty\prth{\mathbb{R}_{+}}$---the global well-posedness of \eqref{v-eqn} follows from classical results in nonautonomous semigroup theory
\cite{PazySemigroups83},
which guarantee the existence and uniqueness of a mild solution
$v\in{C}\prth{\intervallefo{0}{\infty}; L^1\prth{\mathbb{R}_{+}} \cap L^\infty\prth{\mathbb{R}_{+}}}$.
Furthermore, standard parabolic regularity theory implies that this mild solution is, in fact, smooth for positive times: $v\in C^{1,2}_{t,x}\prth{\intervalleoo{0}{\infty}\times\mathbb{R}_{+}}$, so that $v$ is a classical solution to \eqref{v-eqn} for all $t>0$.
Finally the problem satisfies the comparison principle, which ensures in particular that $v$ remains nonnegative whenever $v_{0}$ is nonnegative.

\medskip

Our first main result provides an explicit formula for the solution $v$ to \eqref{v-eqn} when $\beta=1$. This case corresponds to the linear regime of the moving boundary, where the boundary position grows linearly in time---namely $b\prth{t} = ct$. This explicit representation of $v$ enables a sharp analysis of its long-time behavior, revealing fast convergence toward a nontrivial steady state.

\begin{theorem}[Fundamental solution and asymptotic behavior for $\beta=1$]
\label{TH_fundamental_sol_for_beta_1}
    Assume that $c>0$ and $b\prth{t}=ct$. Let $v$ be the unique solution to \eqref{v-eqn} starting from bounded, compactly supported, and nonnegative initial condition $v_{0}$.
	Then the two following points hold true:
 \begin{itemize}
     \item[(i)] For any $t>0$ and $x\in\mathbb{R}_{+}$, the solution $v$ is explicitly given by
\begin{equation}\label{EQ_fundamental_sol_v_linear}
		v\prth{t,x} = \int_{\xi=0}^{\infty}
		H\prth{t,x,\xi}\,v_{0}\prth{\xi} \, \md \xi,
	\end{equation}
	where the fundamental solution $H$ can be written in terms of the well-known heat kernel,
	$G\prth{t,X} = e^{-X^{2}/\prth{4dt}}/\sqrt{4\pi dt}$,
	as
\begin{equation}\label{EQ_heat_kernel_linear}
	\hspace{-6.0mm}
	H\prth{t,x,\xi} : = G(t, x + ct - \xi) +
	G(t, x + ct + \xi) e^{\frac{c}{d}\xi} +
	\frac{c}{d} \int_{\omega=\xi}^{\infty} G(t, x + ct + \omega) e^{\frac{c}{d}\omega} \, \md \omega.
	\end{equation}
    \item[(ii)] For any $x\in \mathbb{R}_{+}$, let
\begin{equation}\label{EQ_def_V_beta_equals_1}
		V\prth{x} := M\frac{c}{d}
        \,
        e^{-\frac{c}{d}x},
	\end{equation}
	then, there are two positive constants $\ell$ and $k$, such that
\begin{equation}\label{EQ_cv_to_V_when_beta_equals_1}
		\vertii{
			v\prth{t,x} - V\prth{x}
		}{L^{1}_{x}\prth{\mathbb{R}_{+}^{}}}
		\leq
		\ell e^{-k t},
		\qquad
		\forall t>0.
	\end{equation}
     \end{itemize}
\end{theorem}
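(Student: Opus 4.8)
\emph{Part (i).} The plan is to construct the fundamental solution $H$ by the method of images and then recover the representation \eqref{EQ_fundamental_sol_v_linear} from uniqueness. Since $b'\equiv c$, equation \eqref{v-eqn} is autonomous and invariant under $x$-translations, so shifts and superpositions of solutions are again solutions; moreover a direct check shows that the Galilean-shifted heat kernel $\widetilde{G}\prth{t,X}:=G\prth{t,X+ct}$ solves $\partial_{t}v=d\partial_{xx}v+c\partial_{x}v$, and that $\widetilde{G}\prth{t,x-\xi}=G\prth{t,x+ct-\xi}$ already carries the pole $\delta\prth{x-\xi}$ at $t=0$. To meet the Robin condition at $x=0$ I would look for
\[
	H\prth{t,x,\xi}=\widetilde{G}\prth{t,x-\xi}+a\prth{\xi}\,\widetilde{G}\prth{t,x+\xi}+\int_{\xi}^{\infty}\rho\prth{\omega}\,\widetilde{G}\prth{t,x+\omega}\,\md\omega,
\]
i.e. a reflected point source plus a \emph{continuous} line of images, the latter being unavoidable for a genuinely Robin (rather than Dirichlet or Neumann) condition. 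Each term solves the PDE and, for fixed $x>0$, vanishes on $\mathbb{R}_{+}$ as $t\to0^{+}$, so the initial datum on $\acco{x>0}$ is untouched; it remains to choose $a$ and $\rho$ so that $d\partial_{x}H+cH=0$ at $x=0$. Setting $\rho=\tfrac{c}{d}\,a$ and integrating by parts in $\omega$ in the line-of-images term collapses $d\partial_{x}H+cH$ at $x=0$ to $\tfrac{ct+\xi}{2t}\croch{G\prth{t,ct-\xi}-a\prth{\xi}\,G\prth{t,ct+\xi}}$, which vanishes once $a\prth{\xi}=e^{c\xi/d}$, by the square-completion $\prth{ct+\xi}^{2}-4dt\cdot\tfrac{c\xi}{d}=\prth{ct-\xi}^{2}$ (so that $G\prth{t,ct+\xi}e^{c\xi/d}=G\prth{t,ct-\xi}$). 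This produces precisely \eqref{EQ_heat_kernel_linear}. Finally, $H$ is smooth for $t>0$ and dominated by Gaussians plus the bounded, integrable tail $\tfrac{c}{d}\int_{\xi}^{\infty}G\prth{t,x+ct+\omega}e^{c\omega/d}\,\md\omega$, so $x\mapsto\int_{0}^{\infty}H\prth{t,x,\xi}v_{0}\prth{\xi}\,\md\xi$ lies in ${C}\prth{\intervallefo{0}{\infty};\,L^{1}\cap L^{\infty}}$, solves \eqref{v-eqn} classically for $t>0$, and attains $v_{0}$; by the uniqueness recalled before the statement it equals $v$, establishing \eqref{EQ_fundamental_sol_v_linear}.

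\emph{Part (ii).} First I would note that $V$ is the unique stationary solution of \eqref{v-eqn} of mass $M$: integrating $dV''+cV'=0$ and discarding the non-integrable constant mode forces $V=\alpha e^{-cx/d}$, the Robin relation then holds automatically, and $\int_{0}^{\infty}V=M$ fixes $\alpha=Mc/d$. For the rate, completing the square in the last term of \eqref{EQ_heat_kernel_linear} recasts it as $\tfrac{c}{d}e^{-cx/d}\int_{x-ct+\xi}^{\infty}G\prth{t,s}\,\md s=\tfrac{V\prth{x}}{M}-\tfrac{c}{d}e^{-cx/d}\int_{-\infty}^{x-ct+\xi}G\prth{t,s}\,\md s$, so that, using $M=\int_{0}^{\infty}v_{0}$,
\[
	v\prth{t,x}-V\prth{x}=\int_{0}^{\infty}\!\Croch{G\prth{t,x+ct-\xi}+G\prth{t,x+ct+\xi}e^{c\xi/d}-\tfrac{c}{d}e^{-cx/d}\!\int_{-\infty}^{x-ct+\xi}\!\!G\prth{t,s}\,\md s}v_{0}\prth{\xi}\,\md\xi.
\]
Since $v_{0}$ is supported in some $\intervalleff{0}{L}$, it suffices to bound each bracketed term in $L^{1}_{x}\prth{\mathbb{R}_{+}}$, uniformly for $\xi\in\intervalleff{0}{L}$. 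The first equals $\int_{ct-\xi}^{\infty}G\prth{t,y}\,\md y$, a Gaussian tail $\lesssim e^{-\prth{ct-L}^{2}/\prth{4dt}}\lesssim e^{-c^{2}t/\prth{4d}}$ for $t$ large. The second, after the identity $G\prth{t,x+ct+\xi}e^{c\xi/d}=G\prth{t,x-ct+\xi}e^{-cx/d}$ and an obvious substitution, equals $e^{c\xi/d}\int_{ct+\xi}^{\infty}G\prth{t,z}\,\md z$, again a Gaussian tail $\lesssim e^{-c^{2}t/\prth{4d}}$. For the third, I split the $x$-integral at $x\approx ct-\xi$: when $x\geq ct-\xi$ the weight $e^{-cx/d}$ alone gives a super-exponentially small contribution, and when $x<ct-\xi$ one uses $\int_{-\infty}^{x-ct+\xi}G\prth{t,s}\,\md s\leq\tfrac12 e^{-\prth{ct-\xi-x}^{2}/\prth{4dt}}$, substitutes $y=ct-\xi-x$, and completes the square in $e^{cy/d-y^{2}/\prth{4dt}}$ --- the emerging $e^{c^{2}t/d}$ partly cancels the $e^{-c^{2}t/d}$ coming from $e^{-c\prth{ct-\xi}/d}$, leaving $\lesssim t\,e^{-c^{2}t/\prth{4d}}$. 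Summing and integrating against $v_{0}$ then gives $\vertii{v\prth{t,\cdot}-V}{L^{1}_{x}\prth{\mathbb{R}_{+}}}\lesssim M\prth{1+t}e^{-c^{2}t/\prth{4d}}$, whence \eqref{EQ_cv_to_V_when_beta_equals_1} holds for any $k<c^{2}/\prth{4d}$ and a suitable $\ell$. (Alternatively, the sharp rate $c^{2}/\prth{4d}$ --- without the polynomial factor --- follows from an entropy argument: \eqref{v-eqn} has the gradient-flow structure $\partial_{t}v=\partial_{x}\!\prth{dV\,\partial_{x}\prth{v/V}}$ with zero flux at $x=0$, so the quadratic entropy $\int_{0}^{\infty}\prth{v-V}^{2}/V$ dissipates, and the weighted Poincaré inequality for $e^{-cx/d}\md x$ on $\mathbb{R}_{+}$ closes the estimate.)

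\emph{Main obstacle.} In Part (i) the computational heart is the boundary identity: closing the line-of-images term requires both the integration by parts in $\omega$ and the square-completion $G\prth{t,ct+\xi}e^{c\xi/d}=G\prth{t,ct-\xi}$. In Part (ii) the delicate step is the third term, where the exponential weight $e^{-cx/d}$ and the Gaussian error-function tails genuinely compete: one has to track how completing the square turns that competition into the net rate $e^{-c^{2}t/\prth{4d}}$, keep the polynomial prefactor harmless, and verify that the integrable singularity of the Gaussian CDF near $x=ct-\xi$ is benign.
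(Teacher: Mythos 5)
Your argument is correct and rests on the same two pillars as the paper's proof --- the algebraic identity $G(t,ct+\xi)\,e^{c\xi/d}=G(t,ct-\xi)$ and Gaussian/erfc tail estimates --- but it is organized differently at both steps. For part \emph{(i)} the paper does not postulate an image kernel: it extends the initial datum to $\{x<0\}$, derives the linear ODE \eqref{EQ_ode_tilde_v0} for the extension $\widetilde v_0$ by imposing the Robin flux condition for all $t>0$, solves it explicitly in \eqref{EQ_tilde_v0}, and recovers $H$ from \eqref{EQ_almost_kernel_v_1}--\eqref{EQ_almost_kernel_v_2} by Fubini; you instead work at the kernel level, with a reflected source plus a continuous line of images whose density is fixed by the boundary identity, and then invoke uniqueness. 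The two constructions are dual and lead to exactly \eqref{EQ_heat_kernel_linear}; yours is complete once you add the routine verification that the convolution attains $v_0$ in $L^1\cap L^\infty$ as $t\to0^+$, which you only wave at. For part \emph{(ii)} the paper applies the representation to $v_0-V$ (using that $V$ is reproduced by the kernel and converting $\int_0^{\xi}(v_0-V)$ into $-\int_{\xi}^{\infty}(v_0-V)$ via equality of masses) and bounds the three resulting terms $I_1,I_2,I_3$; you instead extract $V(x)$ exactly from the tail term of the kernel by completing the square and keep only $v_0$ under the integral, exploiting its compact support. This variant buys you an explicit, essentially sharp rate (any $k<c^2/(4d)$ up to a polynomial prefactor, versus the unspecified constants in the paper), and your gradient-flow/entropy aside is a correct alternative route to the sharp rate. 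The one loose end is small times: your tail bounds are stated for $t$ large, so to obtain \eqref{EQ_cv_to_V_when_beta_equals_1} for \emph{all} $t>0$ you should add the trivial observation that mass conservation gives $\Vert v(t,\cdot)-V\Vert_{L^1_x(\mathbb{R}_+)}\le 2M$, allowing the range $t\le t_0$ to be absorbed into $\ell$ (the paper handles this step with a comparison against stationary sub- and supersolutions).
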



The construction of the solution $v$ in \hyperref[TH_fundamental_sol_for_beta_1]{Theorem~\ref*{TH_fundamental_sol_for_beta_1}~\emph{(i)}} relies on an extension argument made possible by the fact that \eqref{v-eqn} is autonomous when $\beta=1$.

In \eqref{EQ_fundamental_sol_v_linear}-\eqref{EQ_heat_kernel_linear},
the fundamental solution $H$ has an insightful stochastic interpretation.
Indeed, $H\prth{t,x,\xi}$ represents the probability density of the trajectory $X_{t}$ of an individual undergoing reflected Brownian motion with constant drift on $\mathbb{R}_{+}$, starting from $\xi\geq 0$.
This setting is analogous to that described by the probability kernel \eqref{EQ_fundamental_solution_Neumann} associated with the Neumann heat problem \eqref{EQ_global_problem_Neumann}.

Beyond this interpretation, the explicit representation of the solution enables a detailed quantitative analysis. \hyperref[TH_fundamental_sol_for_beta_1]{Theorem~\ref*{TH_fundamental_sol_for_beta_1}~\emph{(ii)}} establishes the $L^1$-convergence of the solution $v\prth{t,\point}$ toward a nontrivial stationary state $V$ at an exponential rate, which is particularly remarkable given that diffusion-driven problems typically exhibit polynomial convergence \cite{VazquezAsymptotic18}.
This rapid stabilization highlights that the moving boundary not only induces mass accumulation but also accelerates convergence toward the stationary profile by confining the individuals.
It is worth noting that such an exponential convergence rate is typical for the heat equation on bounded intervals with Neumann boundary conditions.
In our setting, the confinement induced by the moving boundary then appears to recreate such a bounded-domain-like effect on the population.

It is also notable that the steady-state solution 
$V$ is nontrivial, indicating that the population does not vanish (as it does for classical diffusion equations on unbounded domains), but instead persists and stabilizes into a stationary exponential distribution.
This phenomenon reflects a delicate balance between two opposing mechanisms: the tendency of individuals to disperse through diffusion, and their accumulation near the boundary due to the advection induced by its motion.
The following heuristic perspective helps to further explain this balance: by splitting the dynamics arising from \eqref{v-eqn}$|_{b(t)=ct}$ into a diffusive part:
\begin{equation}\label{EQ_split_v_eqn_diff}
	\partial_{t}v_{\text{diff}} = d\partial_{xx} v_{\text{diff}},
\end{equation}
and an advective part:
\begin{equation}\label{EQ_split_v_eqn_adv}
	\partial_{t}v_{\text{adv}} = c\partial_{x} v_{\text{adv}},
\end{equation}
both associated with the same initial condition
$$v_{\text{diff}}|_{t=0}\prth{x} = v_{\text{adv}}|_{t=0}\prth{x} = e^{-\frac{c}{d}x},$$
we observe that the resulting solutions are exponentials translating in opposite directions:
$$
v_{\text{diff}}\prth{t,x} = e^{-\frac{c}{d}\prth{x-ct}}
\qquad
\text{and}
\qquad
v_{\text{adv}}\prth{t,x} = e^{-\frac{c}{d}\prth{x+ct}}.
$$
This symmetry elegantly reveals the natural compensation between diffusion and drift effects, leading to convergence toward the exponential stationary state $V(x)=\frac{Mc}{d}\sqrt{v_{\text{diff}}\prth{t,x}\cdot v_{\text{adv}}\prth{t,x}}$.

Finally, \Cref{TH_fundamental_sol_for_beta_1} provides a guiding framework for understanding more general regimes.
When the boundary speed is sublinear ($\beta<1$), the solution is expected to exhibit dissipative behavior, as carefully discussed in \Cref{PROPO_BETA_0} and \Cref{TH_asymptotic_behavior_solutions}.
In contrast, a superlinear boundary motion ($\beta>1$) is anticipated to induce strong concentration effects, whose detailed analysis is deferred to future work due to intricate analytical challenges (see \Cref{R_beta_greater_1}).

\medskip

We now proceed with our second main result, describing the convergence of the solution to \eqref{v-eqn} toward self-similar profiles, for any $\beta\in\intervalleff{0}{1}$.
Let us start with the particular case $\beta=0$, in which \eqref{v-eqn} reduces to the classical Neumann heat equation \eqref{EQ_global_problem_Neumann} in $\mathbb{R}_{+}$.
The following proposition, well-known in the literature (see \cite[Theorem 4.1]{VazquezAsymptotic18} for instance), describes the corresponding sharp asymptotic behavior of the solution.
\begin{proposition}[Asymptotic behavior for $\beta=0$]\label{PROPO_BETA_0}
     Let $v$ be the unique solution to \eqref{EQ_global_problem_Neumann} starting from bounded, compactly supported, and nonnegative initial condition $v_{0}$.
    Then there exists $\ell>0$ such that
		\begin{equation}
		\label{CONVERGENCE_BETA_0}
			\Vertii{
				v\prth{t,x} -
				\frac{1}{\sqrt{1+t}}
				W\prtHH{\frac{x}{\sqrt{1+t}}}
			}{L^{1}_{x}\prth{\mathbb{R}_{+}^{}}}
			\leq \frac{\ell}{\sqrt{1+t}},
			\qquad
			\forall t>0,
		\end{equation}
		where $W(y) = W\prth{0} \exp\!\Prth{-\frac{y^{2}}{4d}}$ for any $y\in\mathbb{R}_{+}$,
        with $W\prth{0}\geq 0$ uniquely determined so that the total mass of the self-similar profile $W$ equals $M$.
\end{proposition}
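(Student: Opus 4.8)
The plan is to reduce \Cref{PROPO_BETA_0} to a relative-entropy estimate in self-similar variables; this is the classical route (see \cite{VazquezAsymptotic18}) and it is the template for the general exponent in \Cref{TH_asymptotic_behavior_solutions}. First I set $v\prth{t,x} = \prth{1+t}^{-1/2}\, w\!\prtHH{\tau,\ \tfrac{x}{\sqrt{1+t}}}$ with $\tau := \log\prth{1+t}$; a direct computation turns \eqref{EQ_global_problem_Neumann} into the Fokker--Planck equation
\begin{equation*}
	\partial_{\tau} w \,=\, d\,\partial_{yy} w \,+\, \tfrac12\,\partial_{y}\prth{y\,w},
	\qquad \tau>0,\ y>0,
\end{equation*}
with homogeneous Neumann condition $\partial_{y}w|_{y=0}=0$ and datum $w|_{\tau=0}=v_{0}$. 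The rescaling is an $L^{1}_{x}$--isometry, so $\int_{0}^{\infty} w\prth{\tau,\cdot}=M$ for every $\tau$; the Gaussian $W$ normalised to mass $M$ is the unique steady state, and since $d\,\partial_{y}W+\tfrac12\,yW\equiv 0$, the equation can be written in divergence form $\partial_{\tau}w = d\,\partial_{y}\!\prtHH{W\,\partial_{y}\prth{w/W}}$, whose flux vanishes at $y=0$.

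Next I run the entropy computation. Let $\Hb\prth{\tau} := \int_{0}^{\infty} w\,\log\!\prth{w/W}\,\md y$, which is nonnegative since $w$ and $W$ share the same mass. Differentiating along the flow, using $\tfrac{\md}{\md\tau}\!\int_{0}^{\infty} w = 0$ and integrating by parts---the boundary term at $y=0$ is annihilated by the flux condition, that at $+\infty$ by Gaussian decay---yields
\begin{equation*}
	\Hb'\prth{\tau} \,=\, -\,d\int_{0}^{\infty}\frac{W^{2}}{w}\,\bigl(\partial_{y}\prth{w/W}\bigr)^{2}\,\md y \,=:\, -\,\mathcal{D}\prth{\tau}.
\end{equation*}
The measure $W\,\md y/M$ is, up to the reflecting boundary at $y=0$ (which leaves the constant unchanged, via even extension to $\mathbb{R}$), the half-line restriction of $\mathcal{N}\prth{0,2d}$; its logarithmic Sobolev inequality, combined with the factor $d$ in $\mathcal{D}$, gives exactly $\mathcal{D}\prth{\tau}\ge\Hb\prth{\tau}$. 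Grönwall then yields $\Hb\prth{\tau}\le e^{-\tau}\,\Hb\prth{0}$, with $\Hb\prth{0}<\infty$ because $v_{0}$ is bounded with compact support. Finally the Csiszár--Kullback--Pinsker inequality gives $\vertii{w\prth{\tau,\cdot}-W}{L^{1}}^{2}\le 2M\,\Hb\prth{\tau}\le 2M\,\Hb\prth{0}\,e^{-\tau}$; undoing the rescaling (which preserves the $L^{1}_{x}$ norm, and $e^{-\tau/2}=\prth{1+t}^{-1/2}$) is precisely \eqref{CONVERGENCE_BETA_0} with $\ell=\sqrt{2M\,\Hb\prth{0}}$.

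The main obstacle is the rigorous justification of the entropy identity---differentiation under the integral sign, finiteness of $\mathcal{D}$, and the vanishing of the boundary and decay terms. I would handle it in the standard way: establish everything first for smooth, strictly positive, rapidly decaying approximations of $v_{0}$ (parabolic regularity and the comparison principle make $w>0$ for $\tau>0$), then pass to the limit using lower semicontinuity of $\Hb$ and of $\mathcal{D}$; I would also double-check the sharp logarithmic-Sobolev constant, since it is exactly what fixes the power $1/2$ in the rate. A more elementary alternative avoids entropy altogether: represent $v$ by the reflection formula \eqref{EQ_HEAT_Neumann}--\eqref{EQ_fundamental_solution_Neumann}, use $\vertii{H\prth{t,\cdot,\xi}-H\prth{t,\cdot,0}}{L^{1}_{x}}\lesssim \xi/\sqrt{t}$ together with the finite first moment of $v_{0}$ to replace $v_{0}$ by $M\,\delta_{0}$ at cost $O\!\prth{t^{-1/2}}$, observe that $\int_{0}^{\infty}H\prth{t,\cdot,0}\,v_{0}\,\md\xi = t^{-1/2}\,W\!\prth{\cdot/\sqrt{t}}$, and absorb the $O\!\prth{t^{-1}}$ discrepancy between the self-similar profiles centred at $t$ and at $1+t$.
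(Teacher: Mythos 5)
Your proposal is correct and follows essentially the same route as the paper, which establishes this proposition in \Cref{REM_beta=0} as the $\psi\equiv 0$ special case of the entropy argument of \Cref{SS33_beta_lower_1_2}: parabolic self-similar rescaling, the identity $\mathcal{H}'=-d\,\mathcal{I}$ (in your normalization), the logarithmic Sobolev inequality with $\rho=1/(2d)$, Grönwall, and Csisz\'ar--Kullback. The only cosmetic difference is your choice $\tau=\log(1+t)$ versus the paper's $\tau=\log\sqrt{1+t}$, which merely trades $e^{-\tau/2}$ for $e^{-\tau}$ and yields the same rate $(1+t)^{-1/2}$.
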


Building upon the case $\beta=0$, we extend the analysis to general $\beta\in\intervalleof{0}{1}$, which leads to our main asymptotic result.

\begin{theorem}[{Asymptotic behavior for $\beta\in\intervalleof{0}{1}$}]\label{TH_asymptotic_behavior_solutions}
	Assume that $b\prth{t} = c\croch{\prth{1+t}^{\beta} - 1}$ with $c>0$ and $\beta\in \intervalleof{0}{1}$.
Let $v$ be the unique solution to \eqref{v-eqn} starting from bounded, compactly supported, and nonnegative initial condition $v_{0}$.
Then the asymptotic behavior of $v$ is characterized by convergence to self-similar profiles, according to the three following regimes:
	\begin{itemize}
		\item[(i)] If $0 < \beta<1/2$ ({sub-critical regime}), then there exists $\ell>0$ such that
		\begin{equation}
		\label{beta<1/2: entropy + Duhamel}
			\Vertii{
				v\prth{t,x} -
				\frac{1}{\sqrt{1+t}}
				W\prtHH{\frac{x}{\sqrt{1+t}}}
			}{L^{1}_{x}\prth{\mathbb{R}_{+}^{}}}
			\leq {\frac{\ell}{(1+t)^{\frac{1}{4}-\frac{\beta}{2}}}},
			\qquad
			\forall t>0,
		\end{equation}
		where $W(y) = W\prth{0} \exp\!\Prth{-\frac{y^{2}}{4d}}$ for any $y\in\mathbb{R}_{+}$.
		\item[(ii)] If $\beta=1/2$ ({critical regime}), then there exists $\ell>0$ such that
		\begin{equation}
			\label{beta=1/2: entropy conclu}
				\Vertii{
					v\prth{t,x} -
					\frac{1}{\sqrt{1+t}}
					W\prtHH{\frac{x}{\sqrt{1+t}}}
				}{L^{1}_{x}\prth{\mathbb{R}_{+}^{}}}
				\leq {\frac{\ell}{\sqrt{1+t}}},
				\qquad
				\forall t>0,
		\end{equation}
		where $W(y) = W\prth{0} \exp\!\Prth{-\frac{y^{2}}{4d} - \frac{cy}{2d}}$ for any $y\in\mathbb{R}_{+}$.
		\item[(iii)] If ${1/2<\beta\leq 1}$ ({super-critical regime}), then there exists $\ell>0$ such that
		\begin{equation}\label{EQ_asymptotic_behavior_solutions_larger_half}
			\Vertii{
				v\prth{t,x} -
				\frac{b'\prth{t}}{c\beta}
				W\prtHHH{\frac{b'\prth{t}}{c\beta}\,x}
			}{L^{1}_{x}\prth{\mathbb{R}_{+}^{}}}
			\leq {\frac{\ell \log\prth{1+t}}{\prth{1+t}^{\beta-\frac{1}{2}}}},
			\qquad
			\forall t>1,
		\end{equation}
		where $W(y) = W\prth{0} \exp\!\Prth{-\frac{c\beta}{d} y}$ for any $y\in\mathbb{R}_{+}$.
	\end{itemize}
	In each case above, $W\prth{0}\geq 0$ is the uniquely determined constant such that the total mass of the self-similar profile $W$ equals $M$.
\end{theorem}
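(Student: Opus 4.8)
The overall strategy is to pass to self-similar variables and use the entropy method for the leading-order dissipation, then correct the error coming from the boundary drift $b'(t)$ via Duhamel's principle. Let me sketch the three regimes.

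For the sub-critical and critical regimes $(\beta \le 1/2)$, I would introduce the parabolic rescaling $v(t,x) = \frac{1}{\sqrt{1+t}}\, w\!\left(\tau, \frac{x}{\sqrt{1+t}}\right)$ with the new time $\tau = \log(1+t)$, and compute the equation satisfied by $w(\tau,y)$ on $\mathbb{R}_+$. The rescaled equation should read, schematically,
\begin{equation*}
	\partial_\tau w = d\,\partial_{yy} w + \tfrac{1}{2}\,\partial_y(y\,w) + \varepsilon(\tau)\,\partial_y w,
	\qquad
	-d\,\partial_y w = \tfrac12\, y\, w + \varepsilon(\tau)\, w \ \text{ at } y=0,
\end{equation*}
where $\varepsilon(\tau) = \sqrt{1+t}\,b'(t) = c\beta\,(1+t)^{\beta - 1/2}$ is the rescaled drift, which \emph{decays} when $\beta<1/2$ and is \emph{constant} ($=c/2$) when $\beta = 1/2$. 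The unperturbed Fokker--Planck operator $L w = d\,\partial_{yy}w + \frac12 \partial_y(yw)$ on $\mathbb{R}_+$ with the stated no-flux boundary condition has the Gaussian $W(y)= W(0)e^{-y^2/(4d)}$ as its unique mass-$M$ equilibrium; for the critical case one instead absorbs the constant drift $c/2$ into the operator and the equilibrium becomes $W(0)e^{-y^2/(4d) - cy/(2d)}$ (a completed-square Gaussian), matching the statement. I would then run the relative-entropy (or weighted $L^2$, via the Bakin--Emery / Poincaré inequality for the Gaussian weight) dissipation estimate: define $\mathcal{H}[w \mid W] = \int w \log(w/W)$, differentiate along the flow, obtain $\frac{d}{d\tau}\mathcal{H} \le -\kappa\,\mathcal{H} + C\,|\varepsilon(\tau)|\cdot(\text{boundary and cross terms})$, and close the Grönwall argument. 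For $\beta<1/2$ the forcing $|\varepsilon(\tau)| \sim e^{-(1/2-\beta)\tau}$ gives $\mathcal{H}(\tau) \lesssim e^{-(1/2-\beta)\tau}$, hence by Csiszár--Kullback--Pinsker the $L^1$ bound $\lesssim e^{-(1/4-\beta/2)\tau} = (1+t)^{-(1/4-\beta/2)}$; for $\beta=1/2$ the perturbation is an order-one constant that I fold into the operator, recovering the clean exponential $\mathcal{H}\lesssim e^{-\kappa\tau}$ and thus the $L^1$ rate $(1+t)^{-1/2}$. The name ``entropy + Duhamel'' in the label suggests the boundary terms are not fully controlled by entropy alone and one peels off a Duhamel correction: I would write $w = w_\infty + (\text{semigroup applied to } \varepsilon(\tau)\partial_y w)$ and estimate the correction in $L^1$ using decay of the rescaled heat semigroup.

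For the super-critical regime $(1/2 < \beta \le 1)$ the relevant length scale is no longer $\sqrt{1+t}$ but the advective scale set by $b'(t)$: the boundary sweeps faster than diffusion spreads, so mass piles up in a layer of width $\sim d/b'(t)$. Accordingly I would rescale with $v(t,x) = \frac{b'(t)}{c\beta}\, w\!\left(\tau, \frac{b'(t)}{c\beta} x\right)$ and an appropriate new time, under which the leading operator becomes the drift-dominated Fokker--Planck operator whose mass-$M$ equilibrium is the pure exponential $W(0) e^{-(c\beta/d)y}$ — exactly the stated profile, and consistent with Theorem \ref{TH_fundamental_sol_for_beta_1}\,(ii) when $\beta=1$ (where $b'\equiv c$ and the profile is genuinely stationary, no rescaling needed). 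The diffusive term now appears as a lower-order perturbation of size $\sim (1+t)^{-(\beta-1/2)}$ relative to the drift, plus a term from $b''(t)/b'(t) \sim (1+t)^{-1}$ coming from the time-dependence of the scaling factor. I would again set up a relative-entropy estimate against the exponential equilibrium — using the spectral gap of the Laplace-weighted operator on the half-line (equivalently a weighted Poincaré inequality with weight $e^{-(c\beta/d)y}$) — and Grönwall the forcing terms. The extra $\log(1+t)$ factor in \eqref{EQ_asymptotic_behavior_solutions_larger_half} is the telltale sign of a borderline Grönwall integration: when the decaying forcing $(1+t)^{-(\beta-1/2)}$ is convolved against the (only algebraically, because of the $b''/b'$ term, not exponentially) decaying Duhamel kernel, one loses a logarithm; I would track this carefully via Duhamel's formula.

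The main obstacle, I expect, is the boundary terms. Unlike the whole-line Fokker--Planck problem, here every integration by parts in the entropy computation produces a boundary contribution at $y=0$, and the Robin condition $-d\partial_y w = \tfrac12 y w + \varepsilon w$ mixes the equilibrium flux with the perturbation. Showing that these boundary terms have a favorable sign (or can be absorbed into the dissipation) — in particular that the \emph{stationary} boundary condition is exactly the one annihilating the candidate profile $W$, so that the relative entropy's boundary term vanishes to leading order — is the delicate point, and is presumably where the precise algebraic form of $b(t)$, not just its asymptotics, is used to keep $b'(0)$ finite and the rescaled coefficients smooth up to $\tau=0$. A secondary technical difficulty is justifying the entropy differentiation rigorously (decay of $w$ and its derivatives at $y=+\infty$, positivity/regularity of $w$ for $\tau>0$), which I would handle by the parabolic regularity and comparison principle already quoted for \eqref{v-eqn}, together with a standard approximation argument.
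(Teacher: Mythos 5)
For the sub\nobreakdash-critical and critical regimes your plan coincides with the paper's: the parabolic rescaling produces a Fokker--Planck equation with drift $y+\psi(\tau)$, $\psi(\tau)=2c\beta e^{(2\beta-1)\tau}$, the Robin flux vanishes at $y=0$ so the entropy dissipation identity is clean, and log-Sobolev plus Csisz\'ar--Kullback with a Gr\"onwall step give exactly the rates $(1+t)^{-1/2}$ and $(1+t)^{-(1/4-\beta/2)}$. The one ingredient you wave at but do not supply is the control of the ``boundary and cross terms'': for $0<\beta<1/2$ the entropy derivative picks up the extra term $\psi(\tau)\bigl(w(\tau,0)-\tfrac{1}{2d}\int_0^\infty y\,w\,\mathrm{d}y\bigr)$, and closing Gr\"onwall requires a \emph{uniform-in-$\tau$} bound on $w(\tau,0)$. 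The paper proves this by exhibiting the explicit supersolution $\Lambda\exp\bigl(-\tfrac{y^2}{4d}-\tfrac{\psi(\tau)y}{2d}\bigr)$ (using $\psi'\le 0$) and invoking the comparison principle; without some such argument the forcing term is not actually of size $O(\psi(\tau))$ and the rate does not follow.

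For the super-critical regime your proposed route (relative entropy or weighted Poincar\'e against the exponential equilibrium, then Gr\"onwall) is genuinely different from the paper's, and it runs into the obstruction the authors themselves flag: the limiting potential is \emph{linear}, so it has no uniform convexity and the logarithmic Sobolev inequality of the paper's Lemma 3.1 is unavailable; the exponential measure on the half-line admits only a Poincar\'e/spectral-gap inequality, and making a weighted-$L^2$ argument work would still require controlling the time-dependent perturbation $\eta(\tau)\partial_y(yw)$ in the weight $e^{(c\beta/d)y}$, which you do not do. What the paper does instead is purely perturbative: it regards \eqref{EQ_for_w_beta_larger_1/2} as the autonomous $\beta=1$ problem (whose fundamental solution $H$ is explicit by Theorem \ref{TH_fundamental_sol_for_beta_1}) plus the vanishing source $-\eta(\tau)\partial_y(yw)$, applies Duhamel, and bounds the convolution term by $\int_0^\tau|\eta(\tau-s)|\,s^{-1/2}\,\mathrm{d}s$ times a \emph{uniform bound on the first moment} $\int_0^\infty \xi\,w(\tau,\xi)\,\mathrm{d}\xi\le\Lambda$ (Lemma \ref{LE_upper_bound_first_moment_beta_1_2}, again a supersolution/comparison argument, valid precisely because $\eta\le 0$ when $\beta\le 1$); the logarithm comes from that explicit convolution, not from a borderline Gr\"onwall. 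You correctly identify the rescaling, the exponential profile, and the origin of the $\log$ as a convolution loss, but the decisive lemma --- the uniform first-moment bound feeding the Duhamel term --- is missing from your outline, and the entropy machinery you lean on cannot replace it in this regime without substantial further work.
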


\Cref{TH_asymptotic_behavior_solutions} reveals three distinct regimes, governed by the value of the parameter $\beta$, as further illustrated in \hyperref[FIG_2]{Figure~\ref*{FIG_2}}.
One of the main ingredients of its proof is to introduce a general self-similar transformation of the form
\begin{equation}\label{EQ_RESCALING}
	v(t,x) = f(t)\cdot w\prtH{g(t),f(t)x},
\end{equation}
where the scaling functions $f(t)$ and $g(t)$ are determined according to the different regimes, see \Cref{S3_diffusive_regime} and \Cref{S5_super_critical_case}. 
Under such scaling, the problem satisfied by $w$ becomes a Fokker--Planck type equation:
\begin{equation}\label{EQ_Fokker_Planck}
	\begin{array}{llll}
	\partial_{\tau} w = \partial_{y}\Croch{
		D \partial_{y} w + \partial_{y}\Phi\prth{\tau,y} w
	}, & \, &
	\tau > 0, & y>0,
	\end{array}
\end{equation}
where the time-dependent potential $\Phi(\tau,y)$ stabilizes over time toward a stationary potential $\Phi_{\infty}(y)$. Depending on the parameter $\beta$, this stationary potential $\Phi_{\infty}$ is either quadratic or linear in $y$, and the stationary solutions of this resulting problem \eqref{EQ_Fokker_Planck} are given by:
$$
W\prth{y} = W\prth{0} e^{-\frac{1}{D}\Phi_{\infty}\prth{y}},
$$
consequently leading to Gaussian, exponential, or mixed profiles, according to the precise form of $\Phi_{\infty}$.
It is worth noting that $f\prth{t}\equiv 1$ and $g(t)=t$ in the linear regime $\beta=1$, so that the self-similar transformation \eqref{EQ_RESCALING} reduces to the identity and the unknown $v$ and $w$ coincide.

\medskip

As anticipated by the stochastic perspective introduced in \Cref{S1_intro}, the model displays a sharp transition at the critical value $\beta=1/2$. At this threshold, the boundary drift is of magnitude $\mathcal{O}\prth{\sqrt{t}}$, exactly matching the mean displacement of a reflected Brownian particle on $\mathbb{R}_{+}$.
This alignment marks the onset of significant boundary effects.
In particular, it triggers a self-similar intermediate profile, characterized by a Gaussian shape modulated by an exponential factor, reflecting the emergence of a competition between diffusion and boundary-driven accumulation.
In the sub-critical regime $\beta \in\intervalleoo{0}{\frac{1}{2}}$, the self-similar profiles adopt Gaussian shapes---exactly as for the solutions to the classical heat equation on $\mathbb{R}_{+}$ with Neumann boundary condition, see \Cref{PROPO_BETA_0}. This point is consistent with the weak interactions between the particles and the boundary discussed in \Cref{S1_intro}, see \hyperref[FIG_Brownian_1]{Figure~\ref*{FIG_Brownian_1}~(a)(b)}.
By contrast, in the super-critical regime $\beta\in(\frac{1}{2},1]$, the boundary outpaces the diffusive scaling, giving rise to exponential self-similar profiles with a noticeably heavier tail.
This highlights the strong interaction between the moving boundary and the diffusion process, see \hyperref[FIG_Brownian_1]{Figure \ref*{FIG_Brownian_1} (c)}.

We also point out that, when $\beta=1$, \hyperref[TH_asymptotic_behavior_solutions]{Theorem~\ref*{TH_asymptotic_behavior_solutions}~\emph{(iii)}} provides a polynomial convergence rate, which is significantly less optimal than the sharp exponential convergence exhibited in \hyperref[TH_fundamental_sol_for_beta_1]{Theorem~\ref*{TH_fundamental_sol_for_beta_1}~\emph{(ii)}}.
When $\beta=1/2$, the convergence rate we obtain coincides with that of the Neumann heat equation (see \Cref{PROPO_BETA_0}), for which the entropy method is known to yield the sharp decay rate (see \Cref{REM_beta=0}). Since our proof strategy relies on the same type of entropy arguments---involving only the logarithmic Sobolev and Csisz\'ar--Kullback inequalities, without any crude estimates---it is reasonable to believe that the resulting decay rate is also sharp in this critical case.
For $\beta<1/2$, we do not expect better rates with our technique, as explained in \cref{SS33_beta_lower_1_2}---see
\eqref{EQ_entropy_diff_yet_sharp}-\eqref{3}.

Lastly, let us emphasize that, for $0 \leq \beta < 1$, the solution $v$ tends to vanish as a result of the decaying boundary speed $b'(t)$. Varying 
$\beta$ from $0$ to $1$ thus provides a clear depiction of how the influence of the moving boundary \emph{counteracts diffusion} up to the limiting case $\beta = 1$, where the balance is achieved between the dispersive effect of diffusion and the aggregative effect of the moving boundary.


\renewcommand{\gap}{1}
\refstepcounter{FIGURE}
\label{FIG_2}
\begin{center}
	\includegraphics[scale=\gap]{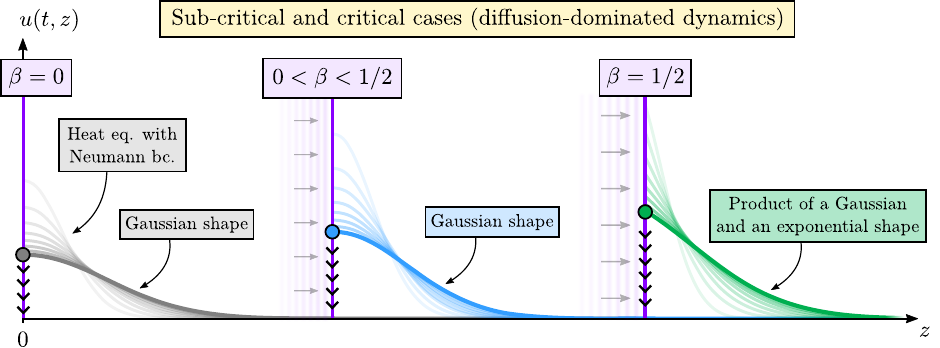}\\[3mm]
	\includegraphics[scale=\gap]{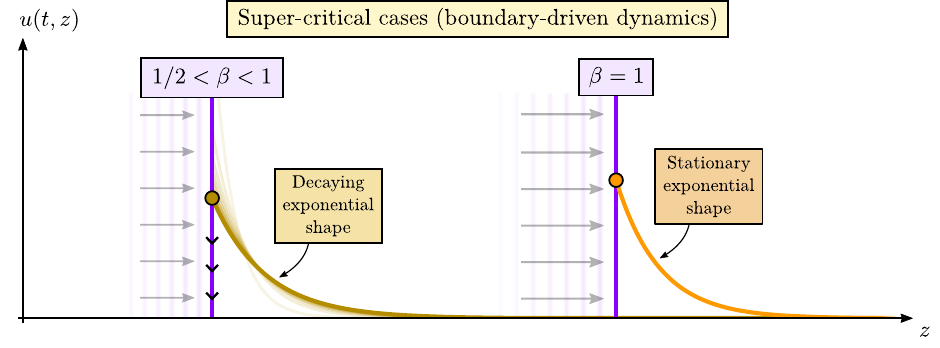}\\[0.5mm]
	\begin{minipage}{149mm}
	\begin{footnotesize}
	\textsc{Figure \theFIGURE~--- long-time behavior of the solutions to \eqref{EQ_global_problem} regarding different regimes of $\beta$.}
	\emph{We recall that the moving boundary $b(t)$ (depicted by the purple vertical lines) behaves asymptotically like $ct^{\beta}$ as $t\to \infty$.
A transition in the shape of the asymptotic profiles occurs at 
$\beta=1/2$, shifting from Gaussian to exponential.
	This threshold also marks a change in the $L^{\infty}$ decay rate, achieved by $u\prth{t,b\prth{t}}=v(t,0)$, shifting from $\mathcal{O}\prth{t^{-1/2}}$ for $\beta\in\intervalleff{0}{\frac{1}{2}}$ to $\mathcal{O}\prth{t^{\beta-1}}$ for $\beta\in\intervalleff{\frac{1}{2}}{1}$.
	As a complement to this macroscopic viewpoint, typical individual trajectories are illustrated in \hyperref[FIG_Brownian_1]{Figure~\ref*{FIG_Brownian_1}} for the cases $\beta \in \acco{0,\frac{1}{4},\frac{3}{4}}$, highlighting the increasing influence of the moving boundary at the microscopic scale.
	}
	\end{footnotesize}
	\end{minipage}
\end{center}

\begin{remark}[On the case $\beta>1$]
\label{R_beta_greater_1}
	It is worth noting that the super-critical self-similar rescaling \eqref{EQ_rescaling_super_sqrt} remains valid for $\beta>1$, and that the Cauchy problem \eqref{EQ_for_w_beta_larger_1/2} still governs the resulting function $w$ in this case.
	Stationary solutions with exponential decay therefore persist, exactly as in the intermediate regime $\beta\in\intervalleof{\frac{1}{2}}{1}$.
	Numerical experiments further suggest that, in this superlinear setting, $w\prth{t,\point}$ converges to such an exponentially decaying steady state.
	This behavior suggests that the function $v$ increasingly concentrates near the moving boundary over time, yet without finite-time blow-up.
	Specifically, we anticipate the following approximate long-time behavior:
	$$\vertii{v(t,\point)}{L^{\infty}(\mathbb{R}_{+})} \approx v(t,0) \approx \frac{b'(t)}{c\beta} = \prth{1+t}^{\beta-1}.$$

	A key point is that the coefficient $\eta(\tau)$ appearing in problem \eqref{EQ_for_w_beta_larger_1/2}, and given by \eqref{EQ_eta}:
    \begin{equation*}
	\eta\prth{\tau}
	=
	\frac{\beta-1}{1 + \prth{2\beta-1}\tau},
\end{equation*} 
	changes sign as $\beta$ crosses the value $1$.
    This sign change drastically alters the behavior of the drift term in \eqref{EQ_for_w_beta_larger_1/2}, and introduces new analytical challenges.
	To be more specific, when $\beta \leq 1$, the drift $c\beta-\eta(\tau)y$ consistently drives the population toward the boundary, thereby promoting mass concentration near $y=0$.
	In contrast, for $\beta>1$, the drift points toward the boundary only for small values of $y$ (where $c\beta-\eta(\tau)y$ remains nonnegative). As 
$y$ increases, the influence of 
$-\eta(\tau)y$ eventually dominates, causing the drift to reverse and effectively driving the population away from the boundary.
	This subtle interplay between boundary accumulation and long-range repulsion significantly complicates the analysis.
	
	In particular, a major obstacle lies in controlling the first moment of the function $w$, for which the standard techniques used for $\beta\in\intervalleof{\frac{1}{2}}{1}$ (such as the comparison principle and the construction of supersolutions) fail, since the reversed sign of $\eta(\tau)$ turns our supersolutions into subsolutions.
	As the first moment directly impacts the upper bound of the convolution term in \eqref{EQ_Duhamel_1}, it is natural to first explore whether the strategy developed for $\beta\in\intervalleof{\frac{1}{2}}{1}$ can be adapted to recover a suitable bound on that moment. Otherwise, a completely different approach will be required.
	Nonetheless, we believe that the first moment is decisive for the long-time dynamics, as clearly evidenced in related studies by Calvez \emph{et al.} \cite{CalvezAnalysis12} and Lepoutre \emph{et al.} \cite{LepoutreCell14}. In those works, explicit evolution equations for the first moment turned out to be significantly useful for the analysis of convergence and potential blow-up behaviors.
	In our model, however, the drift varies in both space and time, which prevents the direct application of these earlier strategies.
	
	Although the analytical framework developed here provides a solid foundation, a refined analysis is still needed to tackle the sign-changing drift and its influence on the first-moment dynamics.
	We leave this challenging question for future work, with the expectation that sharp bounds on the first moment will ultimately yield a complete description of the super-linear regime.
\end{remark}

\paragraph{Strategy of the proofs}

The proof of \hyperref[TH_fundamental_sol_for_beta_1]{Theorem~\ref*{TH_fundamental_sol_for_beta_1}~\emph{(i)}} is based on the explicit construction of the solution \emph{via} an extension method:
we extend the initial data $v_{0}$ to the whole real line, choosing the extension so that, under the evolution governed by $\partial_t = d \partial_{xx} + c \partial_x$, the resulting solution satisfies the Robin boundary condition at $x = 0$ for all $t > 0$.
We then recover the desired solution by restricting the extended solution to the positive half-line $\mathbb{R}_{+}$, and a detailed manipulation of integrals reveals the explicit form of the fundamental solution $H\prth{t,x,\xi}$, as given in \eqref{EQ_heat_kernel_linear}.

In turn, the explicit representation of the solution allows for a precise analysis of its long-time behavior, as stated in \hyperref[TH_fundamental_sol_for_beta_1]{Theorem~\ref*{TH_fundamental_sol_for_beta_1}~\emph{(ii)}}.
By carefully analyzing the structure of the fundamental solution, we derive $L^{1}$-estimates for $v\prth{t,\point}-V$ demonstrating exponential convergence toward the stationary profile.

\medskip

As already explained, \Cref{TH_asymptotic_behavior_solutions} is grounded on a self-similar change of variables---see \eqref{EQ_RESCALING}.
When $b(t)$ is of order $\mathcal{O}(\sqrt{t})$, namely $\beta \leq 1/2$, we use the parabolic scaling $\tau=\log\sqrt{1+t}$ and $y=\frac{x}{\sqrt{1+t}}$, and study the rescaled density $w$ satisfying
\begin{equation*}
	v(t,x)=
	\frac{1}{\sqrt{1+t}} \;
	w
	\Big(
		\log\sqrt{1+t},
		\frac{x}{\sqrt{1+t}}
	\Big).
\end{equation*}
The distance between $w$ and the associated stationary profile $W$ is then measured by means of the Boltzmann relative entropy \cite{VazquezAsymptotic18} defined by
\begin{equation*}
\mathcal{H}(w|W):=\int_0^\infty w(\tau,y)\log\frac{w(\tau,y)}{W(y)}\, \md y.
\end{equation*}
As in the case of the heat equation, this functional can be differentiated with respect to $\tau$.
The strict uniform convexity of the quadratic potential $\Phi(\tau,y)$ (see \eqref{EQ_Fokker_Planck}) then allows us to apply logarithmic Sobolev (\Cref{Lem_logSobolev}) and Csisz\'ar--Kullback (\Cref{Lem_Csiszar_Kullback}) inequalities to estimate the entropy dissipation ${\md\mathcal{H}}/{\md\tau}$ and establish convergence.

When $\beta>1/2$, the change of variables is more delicate:
\begin{equation*}\label{EQ_rescaling_super_sqrt_main_result}
	v\prth{t,x} : =
	\frac{b'\prth{t}}{c\beta} \,
	w \prtHHH{
		\int_{s=0}^{t} \crocHHH{\frac{b'\prth{s}}{c\beta}}^{2} ds , \;
		\frac{b'\prth{t}}{c\beta}x
	}.
\end{equation*}
In this super-critical regime, our entropy approach breaks down, since the potential $\Phi$ in the Fokker--Planck equation \eqref{EQ_Fokker_Planck} loses its strict convexity as $\tau\to\infty$ and becomes linear, thus preventing the use of the logarithmic Sobolev inequality.

Instead, our approach is based on the observation that the Fokker--Planck problem \eqref{EQ_for_w_beta_larger_1/2} governing $w$ can be regarded as the equation associated with $\beta=1$ plus a vanishing perturbation in the interior equation, which we treat as a source term using Duhamel’s principle \cite{GigaNonlinear10}.
The main difficulty then lies in showing that the convolution term in \eqref{EQ_Duhamel_1}, arising from the perturbative source in Duhamel’s formula, vanishes in $L^{1}$ as $\tau\to\infty$.

While this method ultimately allows us to establish the convergence of $w$ toward $W$, it crucially relies on an \emph{a priori} control of the first moment of the solution---which becomes the main obstruction when $\beta>1$ (see \Cref{R_beta_greater_1}).

\paragraph{Perspectives for future research}
In this paper, we focus specifically on the purely diffusive mechanism in the presence of a moving boundary. This simple setting exhibits a surprisingly rich variety of behaviors---as concisely illustrated in \hyperref[FIG_2]{Figure~\ref{FIG_2}}---and paves the way toward the study of more complex nonlinear and density-dependent models, such as reaction--diffusion equations:
\begin{equation}
	\left\lbrace \begin{array}{llll}
		\partial_{t}u = d\partial_{zz} u + f\prth{u}, & \qquad &
		t>0, & z>b\prth{t},\\[0.9mm]
		-d\partial_{z}u = b'\prth{t}u , & \qquad &
		t>0, &z=b\prth{t}.
	\end{array} \right .
\end{equation}

For instance, understanding the $L^{\infty}$ decay rate of the diffusive solutions is essential for studying the viability of biological populations when the reaction term takes the \emph{degenerate monostable} form \cite{AronsonMultidimensional78, FujitaBlowing66}
\begin{equation}\label{EQ_reaction_function}
	f(u) = u^{1+p}(1 - u), \qquad p\geq 0,
\end{equation}
This reaction function $f$ models a well-known ecological phenomenon of growth difficulty at low densities, known as the \emph{Allee effect} \cite{AlleeAnimal31}.\footnote{The exponent $p \geq 0$ quantifies the strength of the low-density growth limitation (\emph{Allee effect}), with $p = 0$ (KPP case) corresponding to the absence of such limitation.}

In the context of our dynamic domain, the speed of the moving boundary is expected to play a crucial role in determining population survival. In particular, it may induce \emph{ecological rescues} in the presence of an \emph{Allee effect}. In such cases, populations that would otherwise become extinct in static domains could survive by aggregating against the moving boundary, thus benefiting from enhanced growth conditions.

Conversely, a fast-moving boundary may instead become harmful to the population, amplifying \emph{intraspecific competition}. This phenomenon, captured by the negative term $-u^{2+p}$ in \eqref{EQ_reaction_function}, may ultimately drive the population to extinction, as a result of excessive aggregation of individuals in the vicinity of the boundary. Importantly, this risk persists even in the classical KPP case $f(u) = u(1 - u)$, i.e., when $p=0$ in \eqref{EQ_reaction_function}, in sharp contrast with the well-known \emph{hair-trigger effect}---namely the success of invasion in unbounded domains.

As a result, the relationship between boundary speed and population persistence may become non-monotonic, since extinction may occur at both very low and very high speeds, while intermediate speeds could allow for survival.
It is worth mentioning that similar non-monotonic phenomena between initial fragmentation and success rate of invasion in the presence of an \emph{Allee~effect} have already been documented in previous works \cite{GarnierSuccess12, AlfaroPropagation24}.

\medskip

Beyond deterministic dynamics, adopting a stochastic perspective on the underlying microscopic diffusive process could be highly valuable for better understanding the evolution of the first moment of the solution, i.e., the mean position of an individual. Such an approach would complement our deterministic analysis and may offer new insights into the dynamics, particularly in the superlinear regime $\beta > 1$---whose analytical understanding remains an open challenge (see~\Cref{R_beta_greater_1}).

\medskip

These directions naturally emerge from our analysis and will be the subject of continued investigation.

\paragraph{Outline of the paper}
We begin with the critical and sub-critical regimes $\beta \leq 1/2$ in \Cref{S3_diffusive_regime}, where we prove the convergence of the solution $v$ toward Gaussian and mixed self-similar profiles \emph{via} an entropy method.
Then, \Cref{S4_linear_regime} is devoted to the linear regime $\beta = 1$, giving the explicit expression of the solution and establishing the asymptotic behavior of the solution $v$.
Lastly, we address in \Cref{S5_super_critical_case} the super-critical regime $\beta > 1/2$, proving the convergence of the solution $v$ toward exponential self-similar profiles using Duhamel's principle and appropriate estimates on the arising convolution term.

\section{The sub-critical and critical regimes: \texorpdfstring{$0 \leq \beta \leq 1/2$}{0 <= beta <= 1/2}}\label{S3_diffusive_regime}

In this section, we focus on the diffusive scale $b(t)=\mathcal{O}(\sqrt{t})$. We prove that the solution converges at a polynomial rate to some self-similar profile, based on entropy dissipation arguments.
This corresponds to \hyperref[TH_asymptotic_behavior_solutions]{Theorem \ref*{TH_asymptotic_behavior_solutions}} \hyperref[TH_asymptotic_behavior_solutions]{\emph{(i)}} and \hyperref[TH_asymptotic_behavior_solutions]{\emph{(ii)}}
that are treated in \cref{SS33_beta_lower_1_2} and \cref{SS32_beta_1_2}, respectively.

\subsection{Self-similar rescaling for $0\leq \beta \leq 1/2$}

We make the change of variable
\begin{equation}\label{self sim 2}
	\tau:=\log\sqrt{1+t},
	\qquad
	y:=\frac{x}{\sqrt{1+t}},
	\qquad
	v(t,x)=
	\frac{1}{\sqrt{1+t}} \;
	w
	\Big(
	\log\sqrt{1+t},
	\frac{x}{\sqrt{1+t}}	\Big).
\end{equation}
Then the function $w$ satisfies the following problem
\begin{equation}\label{w(tau) beta<=1/2}
	\begin{aligned}
		\begin{cases}
				\partial_{\tau}w =
			2d \partial_{yy} w + \partial_{y} \big((y+\psi(\tau)) w\big),
			\qquad &
			\tau>0,\;y>0,\\[2mm]
			- 2d \partial_{y} w(\tau,0) = \psi(\tau) w(\tau,0) , \qquad &
			\tau>0, 
		\end{cases}
	\end{aligned}
\end{equation}
with $w_0=v_0$, where
    \begin{equation}
	\label{psi_beta<=1/2}
	\psi(\tau):= 2c\beta e^{\tau(2\beta-1)}.
    \end{equation}
 
We notice that 
 when $\beta=1/2$, we have $\psi(\tau)\equiv c$, and the associated stationary problem reads
 \begin{equation}\label{stationary pb beta=1/2}
		 \left\lbrace
		 \begin{array}{llll}
		 2d \partial_{yy} W + \partial_{y}\big((y + c)W\big) = 0, & \qquad & y > 0, & \\[1mm]
		 -2d \partial_{y}W(0) = cW(0). & &
		 \end{array}
		 \right .
 \end{equation}

When $\beta\in\intervallefo{0}{\frac{1}{2}}$, we find that  $\psi\equiv0$ if $\beta=0$, and $\psi(\tau)= 2c\beta e^{\tau(2\beta-1)}$
vanishes as $\tau\to\infty$ if $\beta<1/2$.
Therefore, the corresponding stationary problem writes in this case

\begin{equation}\label{EQ_sta_for_W_beta_lower_1/2}
	\left\lbrace
	\begin{array}{llll}
	2d \partial_{yy} W + \partial_{y}(yW) = 0, & \qquad & y > 0, & \\[2mm]
	\partial_{y}W(0) = 0. & &
	\end{array}
	\right .
\end{equation}
It then follows that for $y\ge 0$,
\begin{equation}
	\label{W-beta <= 1/2}
	W(y) =
	\begin{cases}
		W(0) \exp\prtHHH{\!\!-\dfrac{y^{2}}{4d} - \dfrac{cy}{2d}}, & \text{when } \beta = {1}/{2}, \\[4mm]
		W(0) \exp\prtHHH{\!\!-\dfrac{y^{2}}{4d}}, & \text{when } \beta \in [0, {1}/{2}),
	\end{cases}
\end{equation}
where in each case
$W(0)\geq 0$
is a uniquely determined constant such that the mass of the function $W$ equals $M$, i.e., $\int_{0}^{\infty}W(y)\md y=M$.

The main strategy in dealing with the diffusive regime is the entropy approach, together with the logarithmic Sobolev inequality \cite{GrossLogarithmic75, BakryAnalysis14} and the Csisz\'ar--Kullback inequality \cite{CsiszarInformationtype67, CoverElements06}, which we recall below.

\begin{lemma}[Logarithmic Sobolev inequality \cite{OttoGeneralization00}]\label{Lem_logSobolev}
Let $F(y)=\exp(-\Phi(y))$ be a smooth density on $\mathbb{R}_+$ and assume that there is a constant $\rho$ such that $\Phi''(y)\ge \rho>0$ for any $y>0$.
Then for any nonnegative $f\in L^{1}_{y}\prth{\mathbb{R}_{+}}$ satisfying $\int_{0}^{\infty}f(y)\md y=\int_{0}^{\infty}F(y)\md y$, we have
\begin{equation}\label{EQ_logSob} 
\int_0^\infty f(y)\log\bigg(\frac{f(y)}{F(y)}\bigg)\md y
\le \frac{1}{2\rho}\int_0^\infty f(y)\left(\partial_y\log\bigg(\frac{f(y)}{F(y)}\bigg)\right)^2\!\md y.
\end{equation}
\end{lemma}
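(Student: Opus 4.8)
Inequality \eqref{EQ_logSob} is the classical Bakry--Émery criterion: a probability measure whose potential is uniformly convex satisfies a logarithmic Sobolev inequality with an explicit constant. After rescaling we may assume $\int_0^\infty F\,\md y = 1$; write $\md\mu := F\,\md y$ and $g := f/F$, so that $\int_0^\infty g\,\md\mu = 1$, the left-hand side of \eqref{EQ_logSob} is the relative entropy $\mathcal H(g\mu\,|\,\mu) = \int_0^\infty g\log g\,\md\mu$, and the right-hand side equals $\frac{1}{2\rho}\,\Ib(g)$ with the Fisher information $\Ib(g) := \int_0^\infty |\partial_y\log g|^2\, g\,\md\mu$. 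The plan is to interpolate along the Fokker--Planck flow associated with $\mu$ and to exploit the curvature lower bound encoded by $\Phi''\ge\rho$.

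\textbf{Step 1: the semigroup and a reduction.} Introduce the generator $Lv := \partial_{yy}v - \Phi'\partial_y v$ on $\mathbb{R}_+$, together with the Neumann condition $\partial_y v(0)=0$, and let $(P_s)_{s\ge0}$ be the associated semigroup: it is Markovian, conservative, $\mu$-symmetric, and $\mu$ is its unique invariant measure. Since $\Phi''\ge\rho>0$ forces $\Phi$ to grow at least quadratically, $\mu$ is indeed a probability measure and $P_s g\to\int_0^\infty g\,\md\mu = 1$ as $s\to\infty$ (ergodicity), while parabolic regularization makes $P_s g$ smooth, positive, and compatible with the Neumann condition for $s>0$. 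One should record here a density/truncation reduction: it suffices to establish \eqref{EQ_logSob} for $g$ smooth, bounded, bounded below by a positive constant, with $\partial_y g(0)=0$; the general case follows by approximating $f$ (truncate its support, mollify, add $\varepsilon F$, truncate from above), passing to the limit by lower semicontinuity of the entropy and of the Fisher information, and observing that there is nothing to prove when $\Ib(g)=+\infty$.

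\textbf{Step 2: entropy dissipation, curvature, and conclusion.} Set $\phi(s):=\mathcal H(P_sg\,\mu\,|\,\mu)$. Differentiating and integrating by parts on $\mathbb{R}_+$ — every boundary term at $y=0$ vanishing thanks to the Neumann condition — yields the de Bruijn-type identity $\phi'(s) = -\,\Ib(P_s g)$. Differentiating once more and invoking the Bochner/$\Gamma_2$ identity, which in one dimension (the point-boundary carrying no second fundamental form) reads
$$
\Gamma_2(v) = (\partial_{yy}v)^2 + \Phi''\,(\partial_y v)^2 \;\ge\; \rho\,(\partial_y v)^2 = \rho\,\Gamma(v),
$$
one gets the exponential decay of Fisher information, $\Ib(P_s g)\le e^{-2\rho s}\,\Ib(g)$. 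Since $\phi(\infty)=\mathcal H(\mu\,|\,\mu)=0$, integrating $\phi'$ gives
$$
\mathcal H(g\mu\,|\,\mu) = -\int_0^\infty\phi'(s)\,\md s = \int_0^\infty \Ib(P_s g)\,\md s \;\le\; \Ib(g)\int_0^\infty e^{-2\rho s}\,\md s = \frac{1}{2\rho}\,\Ib(g),
$$
which is exactly \eqref{EQ_logSob} after undoing the substitution $g=f/F$.

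\textbf{Main obstacle and an alternative.} The analytic heart is Step 2: justifying two differentiations under the integral sign for a Fokker--Planck flow with unbounded drift, and verifying that every integration by parts on the half-line is boundary-term free — this is precisely where the Neumann condition $\partial_y v(0)=0$ is needed, and where one must check that the flow preserves it. The curvature computation itself is purely algebraic once the calculus is legitimate, and the rest of Step 1 (construction, regularization, and ergodicity of $(P_s)$, plus the approximation argument) is standard for confining Fokker--Planck operators. As an alternative — closer to the reference \cite{OttoGeneralization00} — one may derive \eqref{EQ_logSob} from the HWI inequality $\mathcal H(g\mu\,|\,\mu)\le W\sqrt{\Ib(g)}-\frac{\rho}{2}W^2$ with $W:=W_2(g\mu,\mu)$, valid on the convex set $\mathbb{R}_+$ because $\mathcal H(\,\cdot\,|\,\mu)$ is $\rho$-uniformly displacement convex when $\Phi''\ge\rho$, and then maximize the right-hand side over $W\ge0$, which is attained at $W=\sqrt{\Ib(g)}/\rho$ and produces the constant $\tfrac{1}{2\rho}$.
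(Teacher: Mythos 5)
The paper offers no proof of this lemma: it is imported wholesale from the literature (the citation is to Otto--Villani), so there is nothing internal to compare your argument against. Judged on its own terms, your Bakry--\'Emery semigroup proof is the standard and correct route, and the bookkeeping checks out: the inequality is invariant under simultaneous rescaling of $f$ and $F$, so normalizing $\int F=1$ is harmless; de Bruijn's identity plus the exponential decay $\mathcal{I}(P_sg)\le e^{-2\rho s}\mathcal{I}(g)$ under the curvature bound $\Phi''\ge\rho$ integrates to the constant $\tfrac{1}{2\rho}$, which is exactly what the paper uses later ($\rho=\tfrac{1}{2d}$ for $\Phi(y)=\tfrac{y^2}{4d}+\tfrac{cy}{2d}$, giving $\mathcal{H}\le d\,\mathcal{I}$). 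The one genuinely delicate point — and you identify it correctly as the analytic heart — is that the domain is the half-line: one must check that the Neumann condition is preserved by the flow, that every integration by parts in the de Bruijn and $\Gamma_2$ computations is boundary-term free, and that the $\Gamma_2\ge\rho\,\Gamma$ bound survives at $y=0$ (it does, because $\{0\}$ is a convex, zero-second-fundamental-form boundary); a fully rigorous write-up would also need the truncation/regularization reduction you sketch, including the case $\mathcal{I}(g)=+\infty$. Your alternative via the HWI inequality is in fact the argument of the very reference the paper cites, so either version would legitimately discharge the lemma; what the semigroup route buys is self-containedness (no optimal transport on the half-line needed), at the price of the domain-of-generator technicalities.
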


\begin{lemma}[Csisz\'ar--Kullback inequality \cite{LepoutreCell14}]\label{Lem_Csiszar_Kullback} For any nonnegative functions $f,F\in L^1_{y}(\mathbb{R}_+)$ such that $\int_{0}^{\infty}f(y)\md y=\int_{0}^{\infty}F(y)\md y=M$, we have that
\begin{equation}\label{C-K ineq}
	\Vert f-F\Vert_{L^1_{y}(\mathbb{R}_{+})}^2\le 2M \int_0^\infty f(y)\log\bigg(\frac{f(y)}{F(y)}\bigg) \, \md y.
\end{equation}
\end{lemma}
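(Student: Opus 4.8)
\par\noindent\textbf{Proof (sketch).}
This is the classical Csisz\'ar--Kullback--Pinsker inequality; the plan is to derive it from a sharp pointwise estimate followed by a single application of the Cauchy--Schwarz inequality. The only substantial ingredient is the scalar bound
\begin{equation*}
	p\log\frac{p}{q}-p+q\;\ge\;\frac{3\,(p-q)^{2}}{2\,(p+2q)},\qquad p,q\ge 0,
\end{equation*}
with the usual conventions $0\log 0=0$ and $p\log(p/0)=+\infty$ for $p>0$. Both sides being positively homogeneous of degree one in $(p,q)$, it suffices to set $q=1$ and show that $h(s):=(s+2)\prth{s\log s-s+1}-\tfrac{3}{2}(s-1)^{2}\ge 0$ for all $s\ge 0$. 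A short computation gives $h''(s)=2\bigl(\log s+\tfrac{1}{s}-1\bigr)$, and $s\mapsto\log s+\tfrac{1}{s}-1$ attains its global minimum at $s=1$ with value $0$; hence $h''\ge 0$, so $h$ is convex on $(0,\infty)$. Since in addition $h(1)=h'(1)=0$, the point $s=1$ is the global minimizer of $h$, whence $h\ge 0$ (consistently, $h(0)=\tfrac{1}{2}>0$). I expect this nested convexity computation to be the only real point of care, since it must be carried down to the singular endpoint $s\to 0^{+}$ and up to $s\to\infty$.

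Applying the pointwise bound with $p=f(y)$ and $q=F(y)$ and integrating over $\mathbb{R}_{+}$, the linear terms cancel because $\int_{0}^{\infty}f=\int_{0}^{\infty}F=M$, which gives
\begin{equation*}
	\int_{0}^{\infty}\frac{3\,(f-F)^{2}}{2\,(f+2F)}\,\md y\;\le\;\int_{0}^{\infty}f\log\frac{f}{F}\,\md y.
\end{equation*}
Finally, by Cauchy--Schwarz applied with the weights $\sqrt{2(f+2F)/3}$ and $\sqrt{3/(2(f+2F))}$, and using $\int_{0}^{\infty}(f+2F)\,\md y=3M$,
\begin{equation*}
	\Vert f-F\Vert_{L^{1}_{y}(\mathbb{R}_{+})}^{2}\;\le\;\prtHH{\int_{0}^{\infty}\tfrac{2}{3}\,(f+2F)\,\md y}\prtHH{\int_{0}^{\infty}\frac{3\,(f-F)^{2}}{2\,(f+2F)}\,\md y}\;=\;2M\int_{0}^{\infty}\frac{3\,(f-F)^{2}}{2\,(f+2F)}\,\md y,
\end{equation*}
which, combined with the previous display, is exactly \eqref{C-K ineq}. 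To make the division by $f+2F$ legitimate I would first establish the inequality with $F$ replaced by $F+\varepsilon$ on a bounded interval $[0,R]$ and then pass to the limit $\varepsilon\to 0$, $R\to\infty$ by monotone and dominated convergence; this last step is routine and I would only indicate it.
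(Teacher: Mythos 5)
Your proof is correct. Note, however, that the paper does not prove this lemma at all: it is quoted from the literature (the citation to Lepoutre \emph{et al.}), so there is nothing in the text to compare against line by line. What you supply is the standard self-contained proof of the Csisz\'ar--Kullback--Pinsker inequality: the homogeneous pointwise bound $p\log(p/q)-p+q\ge \tfrac{3(p-q)^2}{2(p+2q)}$, verified by the nested convexity computation (your formulas $h'(s)=2(s+1)\log s-4(s-1)$ and $h''(s)=2(\log s+1/s-1)\ge 0$ check out, as do $h(1)=h'(1)=0$ and $h(0)=1/2$), followed by Cauchy--Schwarz with the weight $f+2F$, whose integral equals $3M$ and produces exactly the constant $2M$ in \eqref{C-K ineq}. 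The linear terms cancel by the equal-mass hypothesis, and the degenerate cases ($F=0$ with $f>0$ makes the right-hand side $+\infty$; $f=F=0$ contributes nothing) are handled either by your $\varepsilon$-regularization or directly by convention. The only value judgment to offer is that your argument is complete and elementary where the paper simply defers to a reference; this is a perfectly acceptable (indeed the canonical) way to establish the lemma.
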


\subsection{Long-time behavior for \texorpdfstring{$\beta=1/2$}{beta = 1/2}}\label{SS32_beta_1_2}

~
\vspace{-5mm}

\begin{proof}[\texorpdfstring{Proof of \hyperref[TH_asymptotic_behavior_solutions]{Theorem \ref*{TH_asymptotic_behavior_solutions} \emph{(ii)}}}{Proof of Theorem \ref*{TH_asymptotic_behavior_solutions} \emph{ii}}]
Since $\beta=1/2$, equation \eqref{w(tau) beta<=1/2} can be reduced to 
\begin{equation}\label{w-eqn:beta=1/2}
	\left\lbrace \begin{array}{llll}
		\partial_{\tau}w =
			2d \partial_{yy} w + \partial_{y} \big((y+c)w\big),
		& \qquad &
		\tau>0, & y>0,\\[2mm]
		- 2d \partial_{y} w(\tau,0) = c w(\tau,0) , & \qquad &
		\tau>0, & 
	\end{array} \right .
\end{equation}
	with $w_0=v_0$, for which the corresponding stationary solution $W$ is given in \eqref{W-beta <= 1/2}.
	The goal is to establish the convergence of the solution $w$ to problem \eqref{w-eqn:beta=1/2} toward $W$.
	
	By introducing the relative entropy associated with \eqref{w-eqn:beta=1/2} as
\begin{equation}\label{EQ_entropy}
		\mathcal{H}(w|W) : =\int_0^\infty w(\tau,y)\log\Prth{\frac{w(\tau,y)}{W(y)}} \md y \geq 0,
		\qquad \forall\tau\geq 0,
\end{equation}
	which is nonnegative and vanishes if and only if $w=W$ (due to Jensen's inequality),
	we find that
\pagebreak
\renewcommand{\gap}{2mm}
\begin{align}
			\frac{\md}{\md \tau}\mathcal{H}(w|W)&=\int_0^\infty \partial_{\tau}w(\tau,y)\Big(\log (w(\tau,y))+\frac{1}{4d}y^{2} + \frac{c}{2d}y\Big) \, \md y\label{EQ_need_mass_preservation}\\[\gap]
	&=\int_0^\infty\partial_y\Big(
	2dw_{y}(\tau,y)+(y+c) w(\tau,y)
	\Big) \Big(\log (w(\tau,y))+\frac{1}{4d}y^{2} + \frac{c}{2d}y\Big) \, \md y\nonumber\\[\gap]
	&=-\int_0^\infty\Big(
	2dw_{y}(\tau,y)+(y+c) w(\tau,y)
	\Big) \bigg(\frac{ \partial_{y}w(\tau,y)}{w(\tau,y)}+\frac{1}{2d}
 (y+c)
 \bigg) \, \md y\nonumber\\[\gap]
	&=-\int_0^\infty
	\Croch{2d\frac{ \partial_{y}w(\tau,y)^2}{w(\tau,y)}+2(y+c) \partial_{y}w(\tau,y)+\frac{1}{d}(y+c)^2w(\tau,y)} \, \md y\nonumber\\[\gap]
	&=-\int_0^\infty w(\tau,y)\bigg(
	   \sqrt{2d}\partial_y\log (w(\tau,y)) +\frac{1}{\sqrt{2d}}(y+c)\bigg)^2 \, \md y\nonumber\\[\gap]
			&=-2d \mathcal{I}(w|W),\label{1}
\end{align}
	where $\mathcal{I}(w|W)$ is called the Fisher information, given by
\begin{equation*}
		\mathcal{I}(w|W) : =\int_0^\infty w(\tau,y)\Prth{\partial_y\log \Prth{\frac{w(\tau,y)}{W(y)}}}^2 \md y
		\qquad \forall \tau\geq 0.
\end{equation*}

Notice that $W''(y)=\frac{1}{2d}$, then it follows from the Logarithmic Sobolev inequality 
that 
\begin{equation}\label{2}
		\mathcal{H}(w|W)\le d \mathcal{I}(w|W).
\end{equation}
Therefore, we infer from \eqref{1} and \eqref{2} that
\begin{align*}
		\frac{\md}{\md \tau}\mathcal{H}(w|W)+2\mathcal{H}(w|W)=-2d \mathcal{I}(w|W)+2\mathcal{H}(w|W)\le 0,
\end{align*}
which implies from Grönwall's lemma that
\begin{equation*}
		\mathcal{H}(w|W)\le \mathcal{H}(w_0|W)e^{-2\tau}.
\end{equation*}

Using the Csisz\'ar--Kullback inequality 
yields that
\begin{equation*}
		\big\Vert w(\tau,y)-W(y)\big\Vert_{L^1_y(\mathbb{R}_{+})}\le \sqrt{2M\mathcal{H}(w_0|W)}e^{-\tau},
		\qquad \forall\tau\geq 0.
\end{equation*}
Turning back to function $v$ using \eqref{self sim 2}, we finally obtain \eqref{beta=1/2: entropy conclu} with $\ell := \sqrt{2M\mathcal{H}(w_0|W)}$.
This concludes the proof of
\hyperref[TH_asymptotic_behavior_solutions]{Theorem~\ref*{TH_asymptotic_behavior_solutions}~\emph{(ii)}}.
\end{proof}

\subsection{Long-time behavior for \texorpdfstring{$0\leq \beta < 1/2$}{0 <= beta < 1/2}}\label{SS33_beta_lower_1_2}

~
\vspace{-5mm}

\begin{proof}[\texorpdfstring{Proof of \hyperref[TH_asymptotic_behavior_solutions]{Theorem \ref*{TH_asymptotic_behavior_solutions} \emph{(i)}}}{Proof of Theorem \ref*{TH_asymptotic_behavior_solutions} \emph{i}}]
	Similar to the case of $\beta=1/2$, again we introduce for any $\tau\geq 0$ the relative entropy:
\begin{equation*}
		\mathcal{H}(w|W) =\int_0^\infty w(\tau,y)\log\Prth{\frac{w(\tau,y)}{W(y)}} \md y,
\end{equation*}
and the Fisher information:
\begin{align*}
	\mathcal{I}(w|W)&=\int_0^\infty w(\tau,y)\Prth{\partial_y\log \Prth{\frac{w(\tau,y)}{W(y)}}}^2 \, \md y\\
	&=-\int_0^\infty
	\Croch{\frac{ \partial_{y}w(\tau,y)^2}{w(\tau,y)}+\frac{y}{d} \partial_{y}w(\tau,y)+\frac{y^2}{4d^2}w(\tau,y)} \md y.
\end{align*}
It follows that
\renewcommand{\gap}{2mm}
\renewcommand{\gapp}{-16mm}
\begin{align}
	\frac{\md}{\md \tau}\mathcal{H}(w|W)&=\int_0^\infty \partial_{\tau}w(\tau,y)\Big(\log (w(\tau,y))+\frac{1}{4d}y^{2}\Big) \, \md y\nonumber\\[\gap]
	&\hspace{\gapp}=\int_0^\infty\partial_{y}\Big(
	2dw_{y}(\tau,y)+(y+\psi(\tau)) w(\tau,y)
	\Big) \Big(\log (w(\tau,y))+\frac{1}{4d}y^{2}\Big) \, \md y\nonumber\\[\gap]
	&\hspace{\gapp}=-\int_0^\infty\Big(
	2dw_{y}(\tau,y)+(y+\psi(\tau)) w(\tau,y)
	\Big) \bigg(\frac{ \partial_{y}w(\tau,y)}{w(\tau,y)}+\frac{y}{2d}\bigg) \, \md y\nonumber\\[\gap]
	&\hspace{\gapp}=-\int_0^\infty
	\Croch{2d\frac{ \partial_{y}w(\tau,y)^2}{w(\tau,y)}+\big(2y+\psi(\tau)\big) \partial_{y}w(\tau,y)+\frac{y}{2d}\big(y+\psi(\tau)\big)w(\tau,y)} \md y\nonumber\\[\gap]
	&\hspace{\gapp}=-2d\int_0^\infty\left(
	\frac{ \partial_{y}w(\tau,y)^2}{w(\tau,y)}+\frac{y}{d} \partial_{y}w(\tau,y)+\frac{y^2}{4d^2}w(\tau,y)\right)+\psi(\tau)\left(\partial_{y}w(\tau,y)+\frac{y}{2d}w(\tau,y)\right)\md y\nonumber\\[\gap]
	&\hspace{\gapp}=-2d \mathcal{I}(w|W)+\psi(\tau)\left(w(\tau,0)-\frac{1}{2d}\int_0^\infty yw(\tau,y) \, \md y\right).\label{EQ_entropy_diff_yet_sharp}
\end{align}

\begin{remark}\label{REM_beta=0}
In the case $\beta=0$, \eqref{EQ_entropy_diff_yet_sharp} simply reduces to 
$ \frac{\md}{\md \tau}\mathcal{H}(w|W)= -2d \mathcal{I}(w|W)$ by noticing that $\psi(\tau)\equiv 0$. Following the lines in the case $\beta=1/2$, we obtain in this case that
\begin{equation*}
		\big\Vert w(\tau,y)-W(y)\big\Vert_{L^1_y(\mathbb{R}_{+})}\le \sqrt{2M\mathcal{H}(w_0|W)}e^{-\tau},
		\qquad \forall\tau\geq 0.
\end{equation*}
This produces the optimal convergence rate $\mathcal{O}(t^{-1/2})$ as stated in \Cref{PROPO_BETA_0} for the Neumann heat equation.
\end{remark}

Let us look at the case $\beta\in\intervalleoo{0}{\frac{1}{2}}$.
Since the precise behavior of the quantity $w(\tau,0)-\frac{1}{2d}\int_0^\infty y w(\tau,y)\md y$ in \eqref{EQ_entropy_diff_yet_sharp} remains unknown, we make a rough estimate by considering the worst-case scenario, relying on the nonnegativity of both $\psi(\tau)$ and the first moment of $w$. This leads to the inequality:
\begin{equation}\label{3}
	\frac{\md}{\md \tau}\mathcal{H}(w|W)\le -2d \mathcal{I}(w|W)+\psi(\tau)w(\tau,0).
\end{equation}
The Logarithmic Sobolev inequality 
leads to $\mathcal{H}(w|W)\le d\mathcal{I}(w|W)$ which together with \eqref{3} gives 
\begin{equation}\label{4}
	\frac{\md}{\md \tau}\mathcal{H}(w|W)
	\le -2\mathcal{H}(w|W)+\psi(\tau)w(\tau,0).
\end{equation}
Before proceeding, let us give a control of $w(\tau,0)$.
\begin{lemma}\label{lem5}
	Assume that $\beta\in\intervalleoo{0}{\frac{1}{2}}$ and that $w_0$ is bounded, compactly supported, and nonnegative in $\mathbb{R}_+$, then there exists $\Lambda>0$ such that $w(\tau,0)<\Lambda$ for every $\tau\ge 0$.
\end{lemma}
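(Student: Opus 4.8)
The plan is to prove the (slightly stronger) fact that $w(\tau,\cdot)$ is \emph{uniformly} bounded on $\mathbb{R}_+$ for all $\tau\ge 0$, by exhibiting an explicit supersolution of the rescaled problem \eqref{w(tau) beta<=1/2} and invoking the comparison principle. The right candidate is the ``frozen-in-time Maxwellian''
$$\bar w(\tau,y):=A\,\exp\!\Prth{-\frac{y^{2}}{4d}-\frac{\psi(\tau)}{2d}\,y},$$
with $A>0$ a constant to be fixed; this is nothing but $A\,e^{-\Phi(\tau,y)/(2d)}$ for the potential $\Phi(\tau,y)=\tfrac12 y^{2}+\psi(\tau)y$ driving \eqref{w(tau) beta<=1/2}. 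First I would record two exact identities obtained by direct differentiation: at each fixed $\tau$, $\bar w$ annihilates the spatial operator, $2d\,\partial_{yy}\bar w+\partial_y\big((y+\psi(\tau))\bar w\big)=0$, and it satisfies the Robin condition with equality, $-2d\,\partial_y\bar w(\tau,0)=\psi(\tau)\bar w(\tau,0)$. Consequently the interior defect reduces to the time derivative alone, $\partial_\tau\bar w-2d\,\partial_{yy}\bar w-\partial_y\big((y+\psi)\bar w\big)=\partial_\tau\bar w=-\dfrac{\psi'(\tau)}{2d}\,y\,\bar w$.

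Here the sub-critical assumption enters decisively: since $\psi(\tau)=2c\beta\,e^{(2\beta-1)\tau}$ with $2\beta-1<0$, the function $\psi$ is nonincreasing, hence $\psi'(\tau)\le 0$ and the defect above is $\ge 0$ for all $\tau,y\ge 0$; thus $\bar w$ is a genuine supersolution of \eqref{w(tau) beta<=1/2}. It then only remains to dominate the initial datum. Writing $\mathrm{supp}\,w_0\subset[0,R]$ and choosing $A:=\|w_0\|_{L^\infty(\mathbb{R}_+)}\exp\!\big(\tfrac{R^{2}}{4d}+\tfrac{c\beta R}{d}\big)$, one checks that $\bar w(0,y)=A\,e^{-y^{2}/(4d)-c\beta y/d}\ge w_0(y)$ for every $y\ge 0$ — trivially for $y>R$, and on $[0,R]$ because the exponent is decreasing in $y$. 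The comparison principle for \eqref{w(tau) beta<=1/2} — which holds because \eqref{v-eqn} admits one (see \Cref{S2_main_results}) and is inherited by the rescaled problem through the self-similar change of variables \eqref{self sim 2} — then yields $0\le w(\tau,y)\le\bar w(\tau,y)$ for all $\tau\ge 0,\ y\ge 0$. Since $\psi(\tau)\ge 0$, the extra exponential factor is $\le 1$, so $w(\tau,y)\le A\,e^{-y^{2}/(4d)}\le A$; in particular $w(\tau,0)\le A<A+1=:\Lambda$, which proves the lemma.

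The only genuinely delicate point is the choice of the barrier. A constant or a pure exponential $A\,e^{-\alpha y}$ fails: the former violates the Robin condition at $y=0$ (the boundary acts as a source, not a sink), and with either one the zeroth-order term $+w$ produced by writing the drift in divergence form, $\partial_y((y+\psi)w)=w+(y+\psi)\partial_y w$, cannot be absorbed. The Maxwellian $A\,e^{-\Phi(\tau,y)/(2d)}$ is precisely the profile that cancels this term while matching the boundary condition, and it is the decay of $\psi$ — that is, the constraint $\beta<1/2$ — that gives the residual $-\psi'(\tau)y\,\bar w/(2d)$ the favorable sign. Everything else is routine; one could alternatively deduce the bound from $L^{1}$--$L^{\infty}$ parabolic smoothing via Duhamel's formula, but the supersolution route is shorter and self-contained.
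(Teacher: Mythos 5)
Your proof is correct and follows essentially the same route as the paper's: the same supersolution $\Lambda\exp\!\big({-\tfrac{y^{2}}{4d}-\tfrac{\psi(\tau)y}{2d}}\big)$, the same computation showing the defect reduces to $-\tfrac{\psi'(\tau)}{2d}y\,\overline w\ge 0$ thanks to $\beta<1/2$, the same exact Robin identity at $y=0$, and the same comparison-principle conclusion. The only difference is that you make the admissible constant explicit via the support radius $R$, which the paper leaves implicit.
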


\begin{proof}[Proof of \Cref{lem5}]
	The proof follows from a comparison argument.
		Define 
	\begin{equation*}
		\overline w(\tau,y):=\exp\left(-\frac{y^2}{4d}-\frac{\psi(\tau)y}{2d}\right),
		\qquad
		\forall\tau\ge 0,\;\, \forall y\ge 0,
	\end{equation*}
then we can easily check that, for any $\tau>0$ and $y>0$,
\begin{align*}
		\partial_{\tau}w -
	2d \partial_{yy} w - \partial_{y} \big((y+\psi(\tau) )w\big)=-\frac{\psi'(\tau)}{2d}y\overline w(\tau,y)=\frac{2c\beta(1-2\beta)}{2d}e^{\tau(2\beta-1)}y\overline w\ge 0,
\end{align*}
and 
\begin{equation*}
	- 2d \partial_{y} w(\tau,0) - \psi(\tau) w(\tau,0)=0.
\end{equation*}

In addition, we can choose $\Lambda>0$ large such that $\Lambda\overline w_0(y)\ge w_0(y)=v_0(y)$ for $y\in\mathbb{R}_+$. We then conclude that $\Lambda\overline w$ is a supersolution to problem \eqref{w(tau) beta<=1/2} in $\intervallefo{0}{\infty}\times\mathbb{R}_{+}$ and the comparison principle implies that $0\le w(\tau,y)\le \Lambda\overline w(\tau,y)$ in $\intervallefo{0}{\infty}\times\mathbb{R}_{+}$. This implies in particular that $0\le w(\tau,0)\le \Lambda\overline w(\tau,0)=\Lambda$ for every $\tau\ge 0$. 
\end{proof}

\medskip

Now we turn back to \eqref{4} and obtain from \Cref{lem5} that 
\begin{equation*}
 		\frac{\md}{\md \tau}\mathcal{H}(w|W)\le -2\mathcal{H}(w|W)+\psi(\tau)\Lambda.
 \end{equation*}
This ordinary differential inequality can be solved by the method of variation of parameters:
\begin{equation*}
	\mathcal{H}(w|W)\le \mathcal{H}(v_0|W)e^{-2\tau}+\Lambda e^{-2\tau}\int_0^\tau \psi(s)e^{2s} \, \md s,
\end{equation*}
which together with \eqref{psi_beta<=1/2} implies that
\begin{align}
		\mathcal{H}(w|W)&\le \mathcal{H}(v_0|W)e^{-2\tau}+2c\beta\Lambda e^{-2\tau}\int_0^\tau e^{s(2\beta-1)} e^{2s} \, \md s\nonumber\\
		&\le \mathcal{H}(v_0|W)e^{-2\tau}+\frac{2c\beta\Lambda }{2\beta+1}\left(e^{(2\beta-1)\tau}-e^{-2\tau}\right)\nonumber\\
		&=\left(\mathcal{H}(v_0|W)-\frac{2c\beta\Lambda }{2\beta+1}\right)e^{-2\tau}+\frac{2c\beta\Lambda }{2\beta+1}e^{(2\beta-1)\tau}\nonumber\\
		&\le k e^{(2\beta-1)\tau},\nonumber
\end{align}
with $k:=\mathcal{H}(v_0|W)+\frac{2c\beta\Lambda }{2\beta+1}>0$.

\smallskip

It then follows from the Csisz\'ar--Kullback inequality 
that 
\begin{equation*}
		\big\Vert w(\tau,y)-W(y)\big\Vert_{L^1_y(\mathbb{R}_{+})}\le \sqrt{2Mk}e^{(\beta-\frac{1}{2})\tau},
		\qquad \forall\tau>0.
\end{equation*}
Therefore, turning back to the function $v$ using \eqref{self sim 2}, we obtain \eqref{beta<1/2: entropy + Duhamel} with $\ell : = \sqrt{2Mk}$. This completes the proof of
\hyperref[TH_asymptotic_behavior_solutions]{Theorem~\ref*{TH_asymptotic_behavior_solutions}~\emph{(i)}}.
\end{proof}

\section{The linear regime: \texorpdfstring{$\beta=1$}{beta = 1}}\label{S4_linear_regime}

We consider here the linear case $\beta=1$, corresponding to the moving boundary $b\prth{t}=ct$. In this regime, equation \eqref{v-eqn}$|_{b(t)=ct}$ simplifies to
\begin{equation}\label{EQ_linear_problem_on_v}
\left\lbrace \begin{array}{lllll}
	\partial_{t}v = d \partial_{xx}v+c\partial_x v, & \qquad &
	t>0, & x>0,\\[0.9mm]
	-d\partial_{x}v = cv , & \qquad &
	t>0, & x=0.
\end{array} \right .
\end{equation}
This section is dedicated to the proof of \Cref{TH_fundamental_sol_for_beta_1} which splits in two parts.
The first concerns the explicit construction of the solution to \eqref{EQ_linear_problem_on_v}, relying on suitable extension arguments. The second establishes the asymptotic $L^{1}$-convergence of $v\prth{t,\point}$ toward the exponential stationary profile $V$, based on a detailed analysis of the fundamental solution.
These two aspects are treated in \cref{SS41_fundamental_sol} and \cref{SS42_asymptotic_behavior}, respectively.

\subsection{Fundamental solution for $\beta=1$}\label{SS41_fundamental_sol}

~
\vspace{-5mm}

\begin{proof}[\texorpdfstring{Proof of \hyperref[TH_fundamental_sol_for_beta_1]{Theorem \ref*{TH_fundamental_sol_for_beta_1} \emph{(i)}}}{Proof of Theorem \ref*{TH_fundamental_sol_for_beta_1} \emph{i}}]
We anticipate an extension $\widetilde{v}_{0}$ of $v_{0}$ in $\acco{x<0}$, such that the solution $\widetilde{v}$ to
\begin{equation}\label{EQ_tilde_v}
\left\lbrace \begin{array}{llll}
	\partial_{t}\widetilde{v} = d\partial_{xx} \widetilde{v}+c\partial_{x} \widetilde{v}, & \qquad &
	t>0, & x\in\mathbb{R},\\[1mm]
	\widetilde{v}|_{t=0} = \widetilde{v}_{0} , & \qquad &
	t=0, & x\in \mathbb{R},
\end{array} \right .
\end{equation}
satisfies $\widetilde{v}(t,x)|_{x>0}= v(t,x)$.
To achieve this, we need to check that the flux condition
$-d\partial_{x}\widetilde{v} = c \widetilde{v}$
is verified
at $x=0$ for any positive time\footnote{Checking this condition for all positive times is essential, as it is precisely what allows the extension argument to succeed when $\beta=1$---in which case equation \eqref{v-eqn} is autonomous.}---this recovers the Robin boundary condition, namely second line in \eqref{EQ_linear_problem_on_v}.
Explaining the solution $\widetilde{v}$ to \eqref{EQ_tilde_v} by convolving $v_{0}$ with the biased heat kernel $G\prth{t,x+ct}$, this condition reads
$$
\int_{\xi=0}^{\infty}
	G\prth{t,ct+\xi}\crocHH{c \widetilde{v}_{0}\prth{-\xi} + d \partial_{x} \widetilde{v}_{0}\prth{-\xi} }
	+
	G\prth{t,ct-\xi}\crocHH{c v_{0}\prth{\xi} + d \partial_{x} v_{0}\prth{\xi} }
\, \md \xi = 0,
$$
and can be recast, thanks to the algebraic trick
$G\prth{t,ct+\xi} = G\prth{t,ct-\xi}e^{-\frac{c}{d}\xi}$, as
$$
\int_{\xi=0}^{\infty}
	G\prth{t,ct-\xi}
	\underbrace{\crocHH{
		e^{-\frac{c}{d}\xi} \prtH{c \widetilde{v}_{0}\prth{-\xi} + d \partial_{x} \widetilde{v}_{0}\prth{-\xi}}
		+
		\prtH{c v_{0}\prth{\xi} + d \partial_{x} v_{0}\prth{\xi}}
	}}_{\text{We ask then the vanishing of this quantity for all $\xi>0$.}}
\md \xi = 0.
$$
Therefore, for $x<0$, $\widetilde{v}_{0}$ should solve the following linear ODE:
\begin{equation}\label{EQ_ode_tilde_v0}
\left\lbrace \begin{array}{lll}
	d\partial_{x} \widetilde{v}_{0}\prth{x} + c \widetilde{v}_{0}\prth{x} = - e^{-\frac{c}{d}x} \crocHH{d\partial_{x}v_{0}\prth{-x} + c v_{0}\prth{-x}}, & \qquad & x<0,\\[1.3mm]
	\widetilde{v}_{0}\prth{0} = v_{0}\prth{0} , & \qquad & x=0,
\end{array} \right .
\end{equation}
where the second line ensures the compatibility of $\widetilde{v}_{0}$ regarding the Robin boundary condition at $x=0$.
As a result, the extended initial data $\widetilde{v}_{0}$ is given by
\begin{equation}\label{EQ_tilde_v0}
	\widetilde{v}_{0}\prth{x} =
	\begin{cases}
	v_{0}\prth{x}, & \quad \text{if }x\geq 0,\\[1.3mm]
	e^{-\frac{c}{d}x}\crocHH{v_{0}\prth{-x} + \frac{c}{d}\int_{0}^{-x}v_{0}\prth{\xi} \, \md \xi}, & \quad \text{if }x\leq 0.
	\end{cases}
\end{equation}
We can now provide a first expression for $v$:
\begin{align}
	v\prth{t,x} &=
	\int_{\xi=0}^{\infty}
		G\prth{t,x+ct-\xi}v_{0}\prth{\xi}
	\, \md \xi +
	\int_{\xi=0}^{\infty}
		G\prth{t,x+ct+\xi}e^{\frac{c}{d}\xi}v_{0}\prth{\xi} 
	\, \md \xi \label{EQ_almost_kernel_v_1}\\[3mm]
	& \hspace{43mm} + \frac{c}{d} \int_{\xi=0}^{\infty}
		G\prth{t,x+ct+\xi} e^{\frac{c}{d}\xi}
		\int_{\omega=0}^{\xi}
			v_{0}\prth{\omega}
		\md \omega
	\, \md \xi, \label{EQ_almost_kernel_v_2}
\end{align}
which can be recast into \eqref{EQ_fundamental_sol_v_linear} with the kernel $H$ given by \eqref{EQ_heat_kernel_linear} by using Fubini's theorem on \eqref{EQ_almost_kernel_v_2}. This concludes the proof of \hyperref[TH_fundamental_sol_for_beta_1]{Theorem \ref*{TH_fundamental_sol_for_beta_1} \emph{(i)}}.
\end{proof}

\subsection{Long-time behavior for $\beta=1$}\label{SS42_asymptotic_behavior}

~
\vspace{-5mm}

\begin{proof}[\texorpdfstring{Proof of \hyperref[TH_fundamental_sol_for_beta_1]{Theorem \ref*{TH_fundamental_sol_for_beta_1} \emph{(ii)}}}{Proof of Theorem \ref*{TH_fundamental_sol_for_beta_1} \emph{ii}}]
	This proof is divided into two steps. We show first that there exist some positive constants $\tilde{\ell}$, $\tilde{k}$ and $t_{0}$ such that
	\begin{equation}\label{EQ_cv_to_V_when_beta_equals_1_first}
		\vertii{
			v\prth{t,x} - V\prth{x}
		}{L^{1}_{x}\prth{\mathbb{R}_{+}^{}}}
		\leq
		\tilde{\ell} e^{-\tilde{k} t},
		\qquad
		\forall t>t_{0}.
	\end{equation}
	We prove then the uniform boundedness of $v$ which enables, combined with \eqref{EQ_cv_to_V_when_beta_equals_1_first}, to reach \eqref{EQ_cv_to_V_when_beta_equals_1} for any $t>0$.

	\medskip

	As a stationary solution to \eqref{EQ_linear_problem_on_v}, the function $V$, defined in \eqref{EQ_def_V_beta_equals_1}, can be rewritten using \eqref{EQ_almost_kernel_v_1}-\eqref{EQ_almost_kernel_v_2} as
	\begin{align*}
		V\prth{x} &=
		\int_{\xi=0}^{\infty}
			G\prth{t,x+ct-\xi}V\prth{\xi}
		\, \md \xi +
		\int_{\xi=0}^{\infty}
			G\prth{t,x+ct+\xi}e^{\frac{c}{d}\xi}V\prth{\xi} 
		\, \md \xi\\[3mm]
		& \hspace{43mm} + \frac{c}{d} \int_{\xi=0}^{\infty}
			G\prth{t,x+ct+\xi} e^{\frac{c}{d}\xi}
			\int_{\omega=0}^{\xi}
				V\prth{\omega}
			\md \omega
		\, \md \xi,
	\end{align*}
	for any $t>0$.
	This equality, combined with \eqref{EQ_almost_kernel_v_1}-\eqref{EQ_almost_kernel_v_2}, allows us to write $v\prth{t,x}-V\prth{x}$ as
	\begin{align*}
		v\prth{t,x}-V\prth{x}
		&=
		\int_{\xi=0}^{\infty}
			G\prth{t,x+ct-\xi}\prth{v_{0}\prth{\xi}-V\prth{\xi}}
		\, \md \xi\\[2mm]
		& \hspace{11mm} + \int_{\xi=0}^{\infty}
			G\prth{t,x+ct+\xi}e^{\frac{c}{d}\xi}\prth{v_{0}\prth{\xi}-V\prth{\xi}} 
		\, \md \xi\\[2mm]
		& \hspace{22mm} - \frac{c}{d} \int_{\xi=0}^{\infty}
			G\prth{t,x+ct+\xi} e^{\frac{c}{d}\xi}
			\int_{\omega=\xi}^{\infty}
				\prth{v_{0}\prth{\omega}-V\prth{\omega}}
			\md \omega
		\, \md \xi.
	\end{align*}
	Notice, in the last integral above, we replaced
	$$
	\int_{\omega=0}^{\xi} \prth{v_{0}-V}
	\qquad
	\text{by}
	\qquad
	-\int_{\omega=\xi}^{\infty} \prth{v_{0}-V},
	$$
	owing to the fact that $\int_{0}^{\infty} v_{0} = \int_{0}^{\infty} V$. This is the only point of the proof where we use this assumption. Taking now the $L^{1}$-norm of $v\prth{t,\point}-V$, we have then
	\begin{align*}
		\vertii{v\prth{t,x}-V\prth{x}}{L^{1}_{x}\prth{\mathbb{R}_{+}^{}}}
		&\leq 
		\underbrace{\int_{\xi=0}^{\infty}
			\verti{v_{0}\prth{\xi}-V\prth{\xi}}
		\int_{x=ct}^{\infty}
			G\prth{t,x -\xi}
		\md x \, \md \xi}_{=:I_{1}\prth{t}}\\[2mm]
		& \hspace{5mm} + \underbrace{\int_{\xi=0}^{\infty}
			\verti{v_{0}\prth{\xi}-V\prth{\xi}} 
		e^{\frac{c}{d}\xi}
		\int_{x=ct}^{\infty}
			G\prth{t,x +\xi}
		\md x \, \md \xi}_{=:I_{2}\prth{t}}\\[2mm]
		& \hspace{10mm} + \frac{c}{d} \underbrace{\int_{\xi=0}^{\infty}
			\int_{\omega=\xi}^{\infty}
				\verti{v_{0}\prth{\omega}-V\prth{\omega}}
			\md \omega
		\, e^{\frac{c}{d}\xi}
		\int_{x=ct}^{\infty}
			G\prth{t,x +\xi}
		\md x \, \md \xi,}_{=:I_{3}\prth{t}}
	\end{align*}
	where we have performed the change of variable $\bar{x} = x+ct$, and then renamed $\bar{x}$ back to $x$, and applied Fubini's theorem to interchange the integration orders.

	Our objective is now to bound $I_{1}$, $I_{2}$ and $I_{3}$ from above. Before proceeding, let us state the following lemma which we shall rely on in the next steps.

	\begin{lemma}\label{LE_beta_equals_1}
		For any $t>0$, there holds
		\begin{equation}\label{EQ_lemma_beta_equals_1_a}
			\phantom{e^{\frac{c}{d}\xi}}\int_{x=ct}^{\infty}G\prth{t,x -\xi}\, \md x
			\leq 
			\exp \crocHHH{
				-\Prth{\frac{c}{2\sqrt{d}}\sqrt{t} - \frac{\xi}{2\sqrt{dt}}}^{2}
			},
			\qquad \forall \xi \in \intervalleoo{0}{ct},
		\end{equation}
		\begin{equation}\label{EQ_lemma_beta_equals_1_b}
			e^{\frac{c}{d}\xi}
			\int_{x=ct}^{\infty}G\prth{t,x +\xi}\, \md x
			\leq 
			\exp \crocHHH{
				-\Prth{\frac{c}{2\sqrt{d}}\sqrt{t} - \frac{\xi}{2\sqrt{dt}}}^{2}
			},
			\qquad \forall \xi \in \intervalleoo{0}{\infty}.
		\end{equation}
	\end{lemma}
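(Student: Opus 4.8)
\textbf{Proof strategy for \Cref{LE_beta_equals_1}.}
The plan is to reduce both integrals to a one-sided Gaussian tail through an affine substitution, and then to invoke the elementary estimate
\[
\int_{a}^{\infty} \frac{1}{\sqrt{4\pi dt}}\, e^{-\frac{s^{2}}{4dt}}\, \md s \;\leq\; \frac{1}{2}\, e^{-\frac{a^{2}}{4dt}}, \qquad \forall\, a\geq 0,\ \forall\, t>0,
\]
which is itself immediate from the pointwise bound $s^{2}\geq (s-a)^{2}+a^{2}$ valid for $s\geq a\geq 0$ (this gives $e^{-s^{2}/(4dt)}\leq e^{-a^{2}/(4dt)}\,e^{-(s-a)^{2}/(4dt)}$, and one integrates, using that $\int_{0}^{\infty}e^{-u^{2}/(4dt)}\,\md u=\tfrac12\sqrt{4\pi dt}$). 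Once this tail estimate is in hand, it only remains to recognize the exponent by completing the square; the spare factor $1/2\leq 1$ absorbs the rounding, so the stated bounds (without the $1/2$) follow.

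For \eqref{EQ_lemma_beta_equals_1_a}, I substitute $y=x-\xi$. Because $\xi\in\intervalleoo{0}{ct}$, the new lower limit $ct-\xi$ is positive, so the tail estimate applies with $a=ct-\xi$ and gives $\int_{x=ct}^{\infty}G\prth{t,x-\xi}\,\md x\leq \tfrac12\exp\!\big(-\tfrac{(ct-\xi)^{2}}{4dt}\big)$. One then simply notes the identity $\tfrac{ct-\xi}{2\sqrt{dt}}=\tfrac{c}{2\sqrt d}\sqrt t-\tfrac{\xi}{2\sqrt{dt}}$, so that squaring yields exactly the exponent in \eqref{EQ_lemma_beta_equals_1_a}.

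For \eqref{EQ_lemma_beta_equals_1_b}, I substitute $y=x+\xi$; now the lower limit $ct+\xi$ is positive for \emph{every} $\xi>0$, which explains why this inequality needs no restriction on $\xi$. The tail estimate gives $\int_{x=ct}^{\infty}G\prth{t,x+\xi}\,\md x\leq\tfrac12\exp\!\big(-\tfrac{(ct+\xi)^{2}}{4dt}\big)$, and it remains to bring the prefactor $e^{\frac cd\xi}$ inside and use the algebraic identity
\[
\frac{c}{d}\xi-\frac{(ct+\xi)^{2}}{4dt}=-\Prth{\frac{c}{2\sqrt d}\sqrt t-\frac{\xi}{2\sqrt{dt}}}^{2},
\]
which is nothing but completing the square --- indeed it is the integrated form of the relation $G\prth{t,ct+\xi}=G\prth{t,ct-\xi}\,e^{-\frac cd\xi}$ already exploited in \cref{SS41_fundamental_sol}.

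I do not anticipate any real difficulty here: the computation is routine. The two points that require a little attention are (i) keeping the lower integration limits nonnegative, so that the Gaussian tail estimate is legitimate --- this is precisely what the hypothesis $\xi<ct$ secures in part (a) --- and (ii) spotting the completing-the-square identity in part (b), which is what makes the growing factor $e^{c\xi/d}$ cancel against the Gaussian decay and ultimately produces the same clean exponent $\big(\tfrac{c}{2\sqrt d}\sqrt t-\tfrac{\xi}{2\sqrt{dt}}\big)^{2}$ in both bounds.
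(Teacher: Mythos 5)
Your proof is correct and follows essentially the same route as the paper: the same affine substitution, the same Gaussian tail estimate (which the paper phrases as $\tfrac12\mathrm{erfc}(\gamma)\leq e^{-\gamma^{2}}$ for $\gamma>0$, and which you re-derive from scratch via $s^{2}\geq (s-a)^{2}+a^{2}$, even retaining an extra factor $\tfrac12$), and the same completing-the-square identity to absorb the prefactor $e^{\frac{c}{d}\xi}$ in the second bound.
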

	
	\begin{proof}[Proof of \Cref{LE_beta_equals_1}]
		Expanding the integral in the left-hand side of \eqref{EQ_lemma_beta_equals_1_a}, we have, for any $\xi \in \intervalleoo{0}{ct}$,
		\begin{align*}
			\int_{x=ct}^{\infty} G\prth{t,x-\xi} \, \md x
			&=\frac{1}{\sqrt{4\pi dt}} \int_{x=ct}^{\infty}
			\exp \crocHHH{
				-\Prth{\frac{\prth{x - \xi}^{2}}{4dt}}
			} \, \md x\\[2mm]
			&=\frac{1}{2} \, \text{erfc}\Prth{\frac{c}{2\sqrt{d}}\sqrt{t}-\frac{\xi}{2\sqrt{dt}}},
		\end{align*}
		where we performed the change of variable
        $\bar{x} = \prth{x-\xi}/\prth{2\sqrt{dt}}$, and then renamed $\bar{x}$ back to $x$, to transition from the first line to the second.
		Now, since $\frac{c}{2\sqrt{d}}\sqrt{t}-\frac{\xi}{2\sqrt{dt}}$ is positive (as $0 < \xi < ct$), and using the fact that $\frac{1}{2}\text{erfc}\prth{\gamma} \leq \exp\prth{-\gamma^{2}}$ for any $\gamma>0$, we finally reach \eqref{EQ_lemma_beta_equals_1_a}.

		\medskip
		
		Similarly to the proof of \eqref{EQ_lemma_beta_equals_1_a}, we have
		\begin{align*}
			e^{\frac{c}{d}\xi}\int_{x=ct}^{\infty} G\prth{t,x+\xi} \, \md x
			&= \frac{e^{\frac{c}{d}\xi}}{\sqrt{4\pi dt}} \int_{x=ct}^{\infty}
			\exp \crocHHH{
				-\Prth{\frac{\prth{x + \xi}^{2}}{4dt}}
			} \, \md x\\[2mm]
			&=\frac{e^{\frac{c}{d}\xi}}{2} \, \text{erfc}\Prth{\frac{c}{2\sqrt{d}}\sqrt{t}+\frac{\xi}{2\sqrt{dt}}}\\[2mm]
			&\leq e^{\frac{c}{d}\xi} \,
			\exp \crocHHH{
				-\Prth{\frac{c}{2\sqrt{d}}\sqrt{t} + \frac{\xi}{2\sqrt{dt}}}^{2}
			}\\[2mm]
			& =\exp \crocHHH{
				-\Prth{\frac{c}{2\sqrt{d}}\sqrt{t} - \frac{\xi}{2\sqrt{dt}}}^{2}
			},
		\end{align*}
		which establishes \eqref{EQ_lemma_beta_equals_1_b}.
	\end{proof}

	\medskip

	We are now ready to begin estimating $I_1$, $I_2$, and $I_3$.

	\medskip

	$\bullet$ \emph{Control of $I_{1}\prth{t}$.} Splitting the integral $\int_{\xi=0}^{\infty}$ into $\int_{\xi=0}^{\frac{c}{2}t}+\int_{\xi=\frac{c}{2}t}^{\infty}$ in the expression of $I_{1}$, we have
	\begin{align}
		I_{1}\prth{t}
		&=
		\int_{\xi=0}^{\frac{c}{2}t}
			\verti{v_{0}\prth{\xi}-V\prth{\xi}}
		\int_{x=ct}^{\infty}
			G\prth{t,x -\xi}
		\md x \, \md \xi \label{EQ_control_I1_a}\\[2mm]
		&\hspace{30mm}
		+ \int_{\xi=\frac{c}{2}t}^{\infty}
			\verti{v_{0}\prth{\xi}-V\prth{\xi}}
		\int_{x=ct}^{\infty}
			G\prth{t,x -\xi}
		\md x \, \md \xi. \label{EQ_control_I1_b}
	\end{align}
	Now substituting \eqref{EQ_lemma_beta_equals_1_a} into \eqref{EQ_control_I1_a}, and noticing that
	$\int_{x=ct}^{\infty}G\prth{t,x-\xi}\md x\leq 1$ in \eqref{EQ_control_I1_b} yields
	\renewcommand{\gap}{1}
	\begin{align}
		\scalebox{\gap}{$\displaystyle I_{1}\prth{t}~$} & \scalebox{\gap}{$\displaystyle \leq
		\int_{\xi=0}^{\frac{c}{2}t}
			\verti{v_{0}\prth{\xi}-V\prth{\xi}}
		\times \exp \crocHHH{
				-\Prth{\frac{c}{2\sqrt{d}}\sqrt{t} - \frac{\xi}{2\sqrt{dt}}}^{2}
			} \, \md \xi 
		+ \int_{\xi=\frac{c}{2}t}^{\infty}
			\verti{v_{0}\prth{\xi}-V\prth{\xi}}
		\, \md \xi\label{EQ_reuse_I1} $}\\[2mm]
		&\scalebox{\gap}{$\displaystyle \leq 
		\exp \crocHHH{
				-\frac{c^{2}}{16d}t
			}
		\int_{\xi=0}^{\frac{c}{2}t}
			\verti{v_{0}\prth{\xi}-V\prth{\xi}}
		\, \md \xi
		+ \int_{\xi=\frac{c}{2}t}^{\infty}
			\verti{v_{0}\prth{\xi}-V\prth{\xi}} \, \md \xi. $}\nonumber
	\end{align}
	Remember at this point that $v_{0}$ is compactly supported. Hence, for $t_{0}>0$ chosen sufficiently large so that $\text{supp}\prth{v_{0}}\cap\{\xi>\frac{c}{2}t_{0}\} = \varnothing$, we have, for any $t>t_{0}$,
	\begin{equation*}\label{EQ_only_V_counts}
		\verti{v_{0}\prth{\xi}-V\prth{\xi}} = \verti{V\prth{\xi}}
		=\verti{\int_{\omega=0}^{\infty}v_{0}\prth{\omega}\md \omega} \frac{c}{d} \, e^{-\frac{c}{d}\xi},
		\qquad
		\forall \xi > \frac{c}{2}t.
	\end{equation*}
	As a consequence,
	\begin{align}
		I_{1}\prth{t} & \leq
		\vertii{v_{0}-V}{L\prth{\mathbb{R}_{+}^{}}}
		\times
		e^{-\frac{c^{2}}{16d}t} +
		\vertii{v_{0}}{L^{1}}
		\times
		\int_{\xi = \frac{c}{2}t}^{\infty}
			\frac{c}{d} \, e^{-\frac{c}{d}\xi}
		\, \md \xi\nonumber\\[2mm]
		&= 
		\vertii{v_{0}-V}{L\prth{\mathbb{R}_{+}^{}}}
		\times
		e^{-\frac{c^{2}}{16d}t} +
		\vertii{v_{0}}{L^{1}}
		\times
		e^{-\frac{c^{2}}{2d}t}\nonumber\\[2mm]
		&\leq
		\tilde{\ell}_{1}e^{-\tilde{k}_{1}t},\label{EQ_upper_bound_beta_equals_1_a}
	\end{align}
	for some positive constants $\tilde{\ell}_{1}$ and $\tilde{k}_{1}$.

	\medskip

	$\bullet$ \emph{Control of $I_{2}\prth{t}$.} From \eqref{EQ_lemma_beta_equals_1_b} in \Cref{LE_beta_equals_1}, we have
	\renewcommand{\gap}{1}
	\begin{align}
		\scalebox{\gap}{$\displaystyle I_{2}\prth{t} ~$}
		&\scalebox{\gap}{$\displaystyle \leq \int_{\xi=0}^{\infty}
			\verti{v_{0}\prth{\xi}-V\prth{\xi}}
			\times \exp \crocHHH{
				-\Prth{\frac{c}{2\sqrt{d}}\sqrt{t} - \frac{\xi}{2\sqrt{dt}}}^{2}
			} \, \md \xi, $}\nonumber\\
		&\scalebox{\gap}{$\displaystyle \leq
		\int_{\xi=0}^{\frac{c}{2}t}
			\verti{v_{0}\prth{\xi}-V\prth{\xi}}
		\times \exp \crocHHH{
				-\Prth{\frac{c}{2\sqrt{d}}\sqrt{t} - \frac{\xi}{2\sqrt{dt}}}^{2}
			} \, \md \xi 
		+ \int_{\xi=\frac{c}{2}t}^{\infty}
			\verti{v_{0}\prth{\xi}-V\prth{\xi}}
		\, \md \xi $},\nonumber
	\end{align}
	which is precisely the right-hand-side of \eqref{EQ_reuse_I1}. Following then the same computation that allowed to reach \eqref{EQ_upper_bound_beta_equals_1_a} in the control of $I_{1}$, we get
	\begin{equation}\label{EQ_upper_bound_beta_equals_1_b}
	I_{2}\prth{t}\leq \tilde{\ell}_{2}e^{-\tilde{k}_{2}t},
	\end{equation}
	with $\prth{\tilde{\ell}_{2} , \tilde{k}_{2}} = \prth{\tilde{\ell}_{1} , \tilde{k}_{1}}$.

	\medskip

	$\bullet$ \emph{Control of $I_{3}\prth{t}$.} Similar arguments as in the controls of $I_{1}$ and $I_{2}$ yield
	\begin{align}
		I_{3}\prth{t} 
		&= \int_{\xi=0}^{\infty}
		\int_{\omega=\xi}^{\infty}
				\verti{v_{0}\prth{\omega}-V\prth{\omega}}
			\md \omega
		\, e^{\frac{c}{d}\xi}
		\int_{x=ct}^{\infty}
			G\prth{t,x +\xi}
		\, \md x \, \md \xi\nonumber\\[2mm]
		&\leq \int_{\xi=0}^{\infty}
		\int_{\omega=\xi}^{\infty}
				\verti{v_{0}\prth{\omega}-V\prth{\omega}}
			\md \omega
		\times \exp \crocHHH{
				-\Prth{\frac{c}{2\sqrt{d}}\sqrt{t} - \frac{\xi}{2\sqrt{dt}}}^{2}
			}
		\, \md \xi\nonumber\\[2mm]
		&\leq
		e^{-\frac{c^{2}}{16d}t}
		\int_{\xi=0}^{\frac{c}{2}t} \int_{\omega = \xi}^{\infty}
			\verti{v_{0}\prth{\omega}-V\prth{\omega}}
		\md \omega\,\md \xi
		+
		\vertii{v_{0}}{L^{1}}
		\int_{\xi=\frac{c}{2}t}^{\infty} \int_{\omega = \xi}^{\infty}
			\frac{c}{d}e^{-\frac{c}{d}\omega}
		\, \md \omega \, \md \xi\nonumber\\[2mm]
		&\leq
		e^{-\frac{c^{2}}{16d}t}
		\int_{\xi=0}^{\infty} \int_{\omega = \xi}^{\infty}
			\verti{v_{0}\prth{\omega}-V\prth{\omega}}
		\md \omega\,\md \xi
		+
		\frac{d}{c}
		\vertii{v_{0}}{L^{1}}
		e^{-\frac{c^{2}}{2d}t}\nonumber\\[2mm]
		&=
		e^{-\frac{c^{2}}{16d}t}
		\int_{\omega=0}^{\infty}
		\omega \verti{v_{0}\prth{\omega}-V\prth{\omega}} \, \md \omega
		+
		\frac{d}{c}
		\vertii{v_{0}}{L^{1}}
		e^{-\frac{c^{2}}{2d}t},\label{EQ_moment}
	\end{align}
	where we applied Fubini's theorem to derive the last line. It remains to observe that the integral in \eqref{EQ_moment} is bounded, since both $\verti{v_{0}}$ and $\verti{V}$ have finite first moment. This results in
	\begin{equation}\label{EQ_upper_bound_beta_equals_1_c}
		I_{3}\prth{t} \leq \tilde{\ell}_{1}e^{-\tilde{k}_{1}t},
	\end{equation}
	for some positive constants $\tilde{\ell}_{3}$ and $\tilde{k}_{3}$.

	By gathering
	\eqref{EQ_upper_bound_beta_equals_1_a}, \eqref{EQ_upper_bound_beta_equals_1_b} and \eqref{EQ_upper_bound_beta_equals_1_c}, we can eventually find some positive constants $\tilde{\ell}$, $\tilde{k}$ and $t_{0}$ so that \eqref{EQ_cv_to_V_when_beta_equals_1_first} holds for any $t>t_{0}$.

	\medskip

	We now focus on the boundedness of the solution $v$ to extend \eqref{EQ_cv_to_V_when_beta_equals_1_first} for any $t>0$ (up to change $\prth{\tilde{\ell},\tilde{k}}$ into $\prth{{\ell},{k}}$).

	Owing to the comparison principle, there are two real constants $\underline{\lambda}\leq \overline{\lambda}$ such that $v$ is sandwiched between the two stationary solutions $\underline{\lambda} \, \frac{c}{d}e^{-\frac{c}{d}x}$ and $\overline{\lambda} \, \frac{c}{d}e^{-\frac{c}{d}x}$, namely
	$$
	\underline{\lambda} \, \frac{c}{d}e^{-\frac{c}{d}x} \leq  v\prth{t,x} \leq \overline{\lambda} \, \frac{c}{d}e^{-\frac{c}{d}x},
	\qquad
	\forall t>0, \, \forall x>0.
	$$
	Hence,
	$$
	\prth{\underline{\lambda}-M} \, \frac{c}{d}e^{-\frac{c}{d}x} \leq  v\prth{t,x} -V\prth{x} \leq \prth{\overline{\lambda}-M} \, \frac{c}{d}e^{-\frac{c}{d}x},
	\qquad
	\forall t>0, \, \forall x>0,
	$$
	and therefore, for $\Lambda : = \max\prth{{\vert{\underline{\lambda}-M}\vert , \vert{\overline{\lambda}-M}\vert}}$,
	\begin{equation}\label{EQ_ready_to_integrate}
		\verti{v\prth{t,x}-V\prth{x}} \leq \Lambda \, \frac{c}{d}e^{-\frac{c}{d}x}
		\qquad
		\forall t>0, \, \forall x>0.
	\end{equation}
	Integrating the inequality \eqref{EQ_ready_to_integrate} over $\mathbb{R}_{+}$ then yields
	\begin{equation}\label{EQ_cv_to_V_when_beta_equals_1_second}
		\vertii{v\prth{t,x}}{L^{1}_{x}\prth{\mathbb{R}_{+}^{}}}
		\leq \Lambda,
		\qquad\forall t>0.
	\end{equation}

	\medskip

	Finally, we only need to combine \eqref{EQ_cv_to_V_when_beta_equals_1_first} and \eqref{EQ_cv_to_V_when_beta_equals_1_second} to eventually establish \eqref{EQ_cv_to_V_when_beta_equals_1} for any $t>0$. This concludes the proof of \hyperref[TH_fundamental_sol_for_beta_1]{Theorem~\ref*{TH_fundamental_sol_for_beta_1}~\emph{(ii)}}.
\end{proof}

\section{The super-critical regime: \texorpdfstring{$1/2<\beta\leq 1$}{1/2 < beta <= 1}}\label{S5_super_critical_case}

In this last section, we address the super-critical regime $b(t)\sim ct^{\beta}$ with $\beta > 1/2$.
We prove that the solution converges at a polynomial rate to some self-similar profile, based on Duhamel's principle and refined kernel estimates. This corresponds to point \hyperref[TH_asymptotic_behavior_solutions]{\emph{(iii)}} of \hyperref[TH_asymptotic_behavior_solutions]{Theorem \ref*{TH_asymptotic_behavior_solutions}}.

\subsection{Self-similar rescaling for $\beta > 1/2$}\label{SS21_self_similar}

We make the change of variable 
\begin{equation}\label{EQ_rescaling_super_sqrt}
	v\prth{t,x} : =
	\frac{b'\prth{t}}{c\beta} \,
	w \prtHHH{\,
		\overbrace{\int_{s=0}^{t} \crocHHH{\frac{b'\prth{s}}{c\beta}}^{2} ds}^{=:\tau}, \;
		\overbrace{\frac{b'\prth{t}}{c\beta}x\vphantom{\int_{0}^{t} [\frac{b'\prth{s}}{c\beta}]^2 \,ds}}^{=:y}
	\,},
\end{equation}
where we recall that the function $b$ is defined in \eqref{EQ_def_algebraic_b}, and that the function $v$ satisfies the equations given in \eqref{v-eqn}.
The relation between the variables $t$ and $\tau$ writes then
\begin{equation}\label{EQ_rel_between_tau_and_t}
	\tau = 
	\frac{1}{2\beta-1} \crocHH{\prth{1+t}^{2\beta-1} - 1}
	\quad \iff \quad
	t = 
	\prtHH{1+\prth{2\beta-1}\tau}^{\frac{1}{2\beta-1}} - 1,
\end{equation}
and the new unknown $w$ satisfies the following problem
\begin{equation}\label{EQ_for_w_beta_larger_1/2}
	\left\lbrace \begin{array}{llll}
		\partial_{\tau}w =
		d \partial_{yy} w + \partial_{y} \crocHH{ \prth{c\beta-\eta\prth{\tau}y}w},
		& \qquad &
		\tau>0, & y>0,\\[2mm]
		- d \partial_{y} w = c \beta w , & \qquad &
		\tau>0, & y=0,\\[2mm]
		w|_{\tau=0} = v_{0} , & \qquad &
		\tau=0, & y>0,
	\end{array} \right .
\end{equation}
where
\begin{equation}\label{EQ_eta}
	\eta\prth{\tau}
	:=
	\frac{\beta-1}{1 + \prth{2\beta-1}\tau}.
\end{equation}
Notice that $\eta\prth{\tau}$ has the sign of $\beta-1$ and vanishes as $\tau\to\infty$ in the regime $\beta>1/2$.
As a result, the stationary problem associated to \eqref{EQ_for_w_beta_larger_1/2} writes
\begin{equation}\label{EQ_sta_for_W_beta_larger_1/2}
	\left\lbrace \begin{array}{lll}
		\partial_{yy} W = - \frac{c\beta}{d} \partial_{y} W, & \qquad &
		y>0,\\[2mm]
		- d \partial_{y} W|_{y=0} = c\beta W|_{y=0} , & \qquad &
		y=0,
	\end{array} \right .
\end{equation}
which gives for $y>0$,
\begin{equation}\label{EQ_sta_for_W_beta_greater_1/2}
W\prth{y} = W\prth{0} \exp\Prth{-\frac{c\beta}{d}y},
\end{equation}
where
$W\prth{0}$
is uniquely determined so that the total masses of $v_{0}$ and $W$ are equal, that is $\int_{0}^{\infty}W(y)\md y=M$.

\subsection{Long-time behavior for \texorpdfstring{$1/2 < \beta \leq 1$}{1/2 < beta <= 1}}\label{SS22_long_time_behavior}

~
\vspace{-5mm}

\begin{proof}[\texorpdfstring{Proof of \hyperref[TH_asymptotic_behavior_solutions]{Theorem \ref*{TH_asymptotic_behavior_solutions} \emph{(iii)}}}{Proof of Theorem \ref*{TH_asymptotic_behavior_solutions} \emph{iii}}]
	Observe in the first line of \eqref{EQ_for_w_beta_larger_1/2}, that $\eta\prth{\tau} \partial_{y}\prth{yw}$ is a vanishing perturbation as $\tau$ goes to $+\infty$.
	Hence, it is natural to consider this quantity as an additional source term to the following evolution problem
	\begin{equation}\label{EQ_for_w_beta_larger_1/2_linear}
		\left\lbrace \begin{array}{llll}
			\partial_{\tau}\tilde{w} =
			d \partial_{yy} \tilde{w} + c\beta \partial_{y}\tilde{w},
			& \qquad &
			\tau>0, & y>0,\\[2mm]
			- d \partial_{y} \tilde{w}|_{y=0} = c\beta \tilde{w}|_{y=0} , & \qquad &
			\tau>0, & y=0,\\[2.7mm]
			\tilde{w}|_{\tau=0} = v_{0} , & \qquad &
			\tau=0, & y>0,
		\end{array} \right .
	\end{equation}
	which has the same form as the linear case \eqref{EQ_linear_problem_on_v}, with the constant $c$ replaced by $c\beta$ now.
    Therefore, it follows from \Cref{TH_fundamental_sol_for_beta_1} that there are some positive constants $\ell$ and $k$ such that
	\begin{equation*}
		\bigg\Vert
			\tilde{w}\prth{\tau,y} - \underbrace{\frac{c\beta}{d}\Prth{\int_{\xi=0}^{\infty}v_{0}\prth{\xi}\md \xi}}_{=: W\prth{0}}e^{-\frac{c\beta}{d}y}
		\bigg\Vert _{L^{1}_{y}\prth{\mathbb{R}_{+}^{}}}
		\leq
		\ell e^{-k \tau},
		\qquad
		\forall \tau>0.
	\end{equation*}
	As a consequence, to establish \eqref{EQ_asymptotic_behavior_solutions_larger_half}, we only need to bound
	$
	\Vertii{
		{w}\prth{\tau,y} - \tilde{w}\prth{\tau,y}
		}{L^{1}_{y}\prth{\mathbb{R}_{+}^{}}}
	$
	which can be rewritten with the Duhamel's principle \cite{GigaNonlinear10} as
\begin{align}
	\Vertii{
	{w}\prth{\tau,y} - \tilde{w}\prth{\tau,y}
	}{L^{1}_{y}\prth{\mathbb{R}_{+}^{}}}
	& =
	\Vertii{
	\int_{s=0}^{\tau}
	{\Sb}_{s}\crocHH{-\eta\prth{\tau-s} \partial_{y}\prth{yw\prth{\tau-s,y}}}
	\md s
	}{L^{1}_{y}\prth{\mathbb{R}_{+}^{}}}\label{EQ_Duhamel_1}
\end{align}
	where $\prth{{\Sb}_{s}}_{s>0}$ is the $C_{0}$-semigroup on $L^{1}_{y}\prth{\mathbb{R}_{+}^{}}$ associated with the linear problem \eqref{EQ_for_w_beta_larger_1/2_linear}.
	Expanding the expression of this semigroup with the fundamental solution $H$ \eqref{EQ_heat_kernel_linear} yields
	\begin{equation}\label{EQ_Duhamel_2}
	\Vertii{
	{w}\prth{\tau,y} - \tilde{w}\prth{\tau,y}
	}{L^{1}_{y}} \leq
	\int_{s=0}^{\tau}
		\vertII{\eta\prth{\tau-s}}
\underbrace{		\int_{y=0}^{\infty}
			\vertIII{
			\overbrace{
				\int_{\xi=0}^{\infty}
					-H\prth{s,y,\xi}
					\partial_{\xi}\croch{\xi w\prth{\tau-s,\xi}}
				\md \xi}^{=: I\prth{\tau,s,y}
				}}
		\,\md y}_{=:J\prth{\tau,s}}
	\,\md s.
	\end{equation}
	Integrating $I$ by parts gives
	$$
	I\prth{\tau,s,y} =
	\int_{\xi=0}^{\infty}
		\partial_{\xi}H\prth{s,y,\xi}
		\croch{\xi w\prth{\tau-s,\xi}}
	\,\md \xi,
	$$
	where $\partial_{\xi}H$ can be computed from the expression \eqref{EQ_heat_kernel_linear} of $H$ (with $c$ replaced by $c\beta$) :
	$$
	\partial_{\xi}H\prth{s,y,\xi} =
	\frac{1}{2ds\sqrt{4\pi ds}}
	\Croch{
		\prth{y+c\beta s-\xi}e^{-\frac{\prth{y+c\beta s-\xi}^{2}}{4ds}} -
		\prth{y+c\beta s+\xi}e^{-\frac{\prth{y+c\beta s+\xi}^{2}}{4ds}}e^{\frac{c\beta }{d}\xi}
	}.
	$$
	Hence, by Fubini's theorem,
	$$
\scalebox{0.98}{$\displaystyle 	J\prth{\tau,s} \leq
	\int_{\xi=0}^{\infty}
		\frac{\xi w\prth{\tau-s,\xi}}{2ds\sqrt{4\pi ds}}
\underbrace{			\int_{y=0}^{\infty}
				\Croch{
				\verti{y+c\beta s-\xi}e^{-\frac{\prth{y+c\beta s-\xi}^{2}}{4ds}} +
				\prth{y+c\beta s+\xi}e^{-\frac{\prth{y+c\beta s+\xi}^{2}}{4ds}}e^{\frac{c\beta }{d}\xi}
				}
			\md y}_{=:K\prth{\tau,s,\xi}}
	\md \xi, $}
	$$
	where $K$ can be bounded as follows
	\begin{align}
		K\prth{\tau,s,\xi}
		&= \int_{\omega=c\beta s-\xi}^{\infty}
			\verti{\omega}e^{-\frac{\omega^{2}}{4ds}}
		\,\md \omega
		+ e^{\frac{c\beta }{d}\xi}\int_{\omega=c\beta s+\xi}^{\infty}
			{\omega}e^{-\frac{\omega^{2}}{4ds}}
		\,\md \omega
		\nonumber\\[2mm]
		&\leq \int_{\mathbb{R}}^{}
			\verti{\omega}e^{-\frac{\omega^{2}}{4ds}}
		\,\md \omega
		+ 2ds \, e^{\frac{c\beta }{d}\xi} e^{-\frac{\prth{c\beta s+\xi}^{2}}{4ds}}
		\nonumber\\[2mm]
		&= 4ds
		+ 2ds \, e^{-\frac{\prth{c\beta s-\xi}^{2}}{4ds}}
		\nonumber\\[2mm]
		&\leq  6ds.
		\nonumber
	\end{align}
	As a result
	\begin{equation}\label{EQ_control_J_beta_larger_1_2}
		J\prth{\tau,s} \leq \frac{3}{\sqrt{4\pi ds}}
		\int_{\xi=0}^{\infty}
			\xi w\prth{\tau-s, \xi}
		\,\md \xi.
	\end{equation}
	To proceed with our analysis, we need to establish a uniform upper bound on the first moment of $w$:
	\begin{lemma}[Upper bound on the first moment of $w$]\label{LE_upper_bound_first_moment_beta_1_2}
		Assume that $\beta\in(\frac{1}{2},{1}]$ and that $v_{0}$ is bounded, compactly supported, and nonnegative in $\mathbb{R}_{+}$.
		Then there exists $\Lambda>0$, depending on $c$, $d$, $\beta$, $\vertii{v_{0}}{L^{\infty}}$ and $\max\prth{\text{supp}\prth{v_{0}}}$, such that
		$$
		\int_{\xi=0}^{\infty}
			\xi w\prth{\tau, \xi}
		\,\md \xi
		\leq \Lambda,
		\qquad
		\forall \tau > 0.
		$$
	\end{lemma}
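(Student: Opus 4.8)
The plan is to prove this by a \emph{comparison argument}: I would exhibit an explicit supersolution of the rescaled problem \eqref{EQ_for_w_beta_larger_1/2} whose first moment is finite and, crucially, \emph{bounded uniformly in $\tau$}. A direct moment--ODE approach seems doomed here: differentiating $m_1(\tau):=\int_0^\infty y\,w(\tau,y)\,\md y$ and integrating by parts (using the zero--flux boundary condition coming from \eqref{EQ_for_w_beta_larger_1/2}) gives $m_1'(\tau)=d\,w(\tau,0)-c\beta M+\eta(\tau)\,m_1(\tau)$; although $\eta(\tau)\le 0$ for $\beta\le 1$ supplies damping, closing this estimate requires precise control of the boundary trace $w(\tau,0)$, which is essentially as strong as \hyperref[TH_asymptotic_behavior_solutions]{Theorem~\ref*{TH_asymptotic_behavior_solutions}~\emph{(iii)}} itself. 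This is also why the lemma is allowed to depend on $\vertii{v_0}{L^\infty}$ and $\max(\operatorname{supp}v_0)$ rather than on the mass: a pointwise supersolution needs to dominate $v_0$ at $\tau=0$.

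The supersolution must decay at the \emph{stationary} rate $c\beta/d$ appearing in \eqref{EQ_sta_for_W_beta_greater_1/2}, since a slower decay would make the rescaled moment grow. A pure exponential $A\,e^{-\alpha y}$ fails: the Robin condition forces $\alpha\ge c\beta/d$, while the interior inequality tested at $y=0$ then forces the prefactor to grow in time, and the resulting moment diverges. Instead I would take
$$
\overline w(\tau,y):=A\,\exp\!\Prth{-\frac{\mu(\tau)}{2d}\,y^2-\frac{c\beta}{d}\,y},
\qquad
\mu(\tau):=-\eta(\tau)=\frac{1-\beta}{1+(2\beta-1)\tau}\ge 0,
$$
which is, up to the constant $A$, the stationary density of the Fokker--Planck operator with the drift $c\beta+\mu(\tau)y$ \emph{frozen} at time $\tau$. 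A short computation shows that $d\partial_{yy}\overline w+\partial_y\!\big[(c\beta-\eta(\tau)y)\overline w\big]$ vanishes identically and that $-d\partial_y\overline w(\tau,0)=c\beta\,\overline w(\tau,0)$, so $\overline w$ satisfies the interior equation and the Robin condition with equality; the only residual is
$$
\partial_\tau\overline w-d\partial_{yy}\overline w-\partial_y\!\big[(c\beta-\eta(\tau)y)\overline w\big]=-\frac{\mu'(\tau)}{2d}\,y^2\,\overline w\ \ge\ 0,
$$
nonnegative precisely because $\mu'(\tau)\le 0$ for $\beta\in\intervalleof{1/2}{1}$. Taking $A:=\vertii{v_0}{L^\infty(\mathbb{R}_+)}\exp\!\big(\tfrac{(1-\beta)R^2}{2d}+\tfrac{c\beta R}{d}\big)$ with $R:=\max(\operatorname{supp}v_0)$ ensures $\overline w(0,\cdot)\ge v_0$ on $\mathbb{R}_+$.

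It then remains to invoke the comparison principle for \eqref{EQ_for_w_beta_larger_1/2} — which holds as it does for \eqref{v-eqn}, the two being linked by the smooth change of variables \eqref{EQ_rescaling_super_sqrt}, the interior drift having only linear growth in $y$ — together with $v_0\ge 0$, to get $0\le w(\tau,y)\le\overline w(\tau,y)$ for all $\tau,y\ge 0$. Dropping the Gaussian factor ($\mu(\tau)\ge 0$) yields
$$
\int_0^\infty y\,w(\tau,y)\,\md y\ \le\ A\int_0^\infty y\,e^{-\frac{c\beta}{d}y}\,\md y\ =\ \frac{A d^2}{c^2\beta^2}\ =:\ \Lambda,\qquad\forall\,\tau>0,
$$
with $\Lambda$ depending only on $c,d,\beta,\vertii{v_0}{L^\infty}$ and $\max(\operatorname{supp}v_0)$, as claimed.

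The main subtlety is really the choice of $\overline w$: note that for $\beta>1$ one has $\mu(\tau)<0$, so the Gaussian factor becomes $e^{+|\mu(\tau)|y^2/(2d)}$ and the supersolution is no longer integrable — consistent with \hyperref[R_beta_greater_1]{Remark~\ref*{R_beta_greater_1}} singling out the first moment as the genuine obstruction in the superlinear regime. Beyond that, the only point requiring care is the justification of the comparison principle in this setting (a Robin coefficient of the ``non-standard'' sign, and an unbounded — though confining — drift); this is routine but worth stating explicitly, e.g.\ via the standard $e^{-\lambda\tau}$ reduction applied to $\overline w-w$.
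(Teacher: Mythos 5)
Your proof is correct and follows essentially the same route as the paper: the authors use exactly the supersolution $\overline w(\tau,y)=\lambda\exp\prth{-\frac{c\beta}{d}y+\frac{\eta(\tau)}{2d}y^{2}}$ (your $\mu=-\eta$), verify the same residual sign via $\eta'(\tau)\geq 0$ for $\beta\leq 1$, and conclude by comparison and by dropping the Gaussian factor. Your version merely adds an explicit admissible constant $A$ and some motivating discussion, which is consistent with, not divergent from, the paper's argument.
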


\begin{proof}[\texorpdfstring{Proof of \hyperref[LE_upper_bound_first_moment_beta_1_2]{Lemma \ref*{LE_upper_bound_first_moment_beta_1_2}}}{Proof of Lemma \ref*{LE_upper_bound_first_moment_beta_1_2}}]
		The proof follows from a comparison argument. Define, for $\lambda>0$,
		\begin{equation}\label{EQ_super_solution_beta_greater_1/2}
		\begin{array}{llll}
			\overline{w}\prth{\tau,y}:=
			\lambda		
			\exp\Prth{
				-\dfrac{c\beta }{d}y
				+\dfrac{\eta\prth{\tau}}{2d}y^{2}
			},
			& \quad &
			\tau>0, & y>0.
		\end{array}
		\end{equation}
		Then we can check that, for any $\tau>0$ and $y>0$,
		$$
		\Lb\overline{w} : = \partial_{t}\overline{w} - d \partial_{yy}\overline{w}
		- \partial_{y} \crocHH{ \prth{c\beta -\eta\prth{\tau}y}\overline{w}}
		= \frac{y^{2}\eta'\prth{t}}{2d}\overline{w},
		$$
		and for any $\tau>0$,
		$$
		-d\partial_{y}\overline{w}\prth{\tau,0} - c\beta  \overline{w}\prth{\tau,0} = 0,
		\qquad \text{at $y=0$}.
		$$
		In particular,
		$\text{sign}\prth{\Lb\overline{w}} = \text{sign}\prth{\eta'\prth{\tau}} = \text{sign}\prth{1-\beta}$, so that $\overline{w}$ is a super solution to problem \eqref{EQ_for_w_beta_larger_1/2_linear} since $\frac{1}{2}<\beta\leq 1$.
		As a result, by taking $\lambda>0$ large enough\footnote{This requires that $\lambda$ depends on $c$, $d$, $\beta$, $\vertii{v_{0}}{L^{\infty}}$ and $\max\prth{\text{supp}\prth{v_{0}}}$.} so that
		$
		\overline{w}|_{\tau=0}>w|_{\tau=0}
		$
		for any $y>0$, we have $\overline{w}>w$ for any $\tau>0$ and $y>0$.

		Considering that $\eta$ is nonpositive, we can finally write
		$$
		\int_{\xi=0}^{\infty}
			\xi \, w\prth{\tau, \xi}
		\,\md \xi \leq 
		\int_{\xi=0}^{\infty}
			\xi \, \overline{w}\prth{\tau, \xi}
		\,\md \xi \leq 
		\lambda \int_{\xi=0}^{\infty}
			\xi \, e^{-\frac{c\beta }{d}\xi}
		\,\md \xi  =: \Lambda.
		$$
	\end{proof}
	
	We now turn back to \eqref{EQ_Duhamel_2}, and obtain from \Cref{LE_upper_bound_first_moment_beta_1_2} and \eqref{EQ_control_J_beta_larger_1_2} that
	\begin{align}
		\Vertii{
		{w}\prth{\tau,y} - \tilde{w}\prth{\tau,y}
		}{L^{1}_{y}\prth{\mathbb{R}_{+}^{}}}
		&\leq
		\int_{s=0}^{\tau}
			\vertI{\eta\prth{\tau-s}}
			\frac{3\Lambda}{\sqrt{4\pi ds}}
		\,\md s \nonumber\\[2mm]
		&=
		\frac{3\Lambda\prth{1-\beta}}{\prth{2\beta-1}\sqrt{4\pi d}}
		\underbrace{\int_{s=0}^{\tau}
			\frac{1}{\frac{1}{2\beta-1}+\tau-s} \,
			\frac{1}{\sqrt{s}}
		\,\md s}_{= : L\prth{\tau}}. \nonumber
	\end{align}
	It remains to control the convolution $L\prth{\tau}$. To achieve this, let us set $s=\tau\sigma$ in the integral in $L\prth{\tau}$. This yields
	\begin{align}
		L\prth{\tau}
		&=
		\int_{\sigma=0}^{1}
			\frac{\tau}{\frac{1}{2\beta-1}+\tau-\tau\sigma} \,
			\frac{1}{\sqrt{\tau\sigma}}
		\,\md \sigma. \nonumber\\[2mm]
		&=
		\int_{\sigma=0}^{1/2}
			\frac{\tau}{\frac{1}{2\beta-1}+\tau-\tau\sigma} \,
			\frac{1}{\sqrt{\tau\sigma}}
		\,\md \sigma
		+
		\int_{\sigma=1/2}^{1}
			\frac{\tau}{\frac{1}{2\beta-1}+\tau-\tau\sigma} \,
			\frac{1}{\sqrt{\tau\sigma}}
		\,\md \sigma. \nonumber\\[2mm]
		&\leq 
		\frac{2}{\sqrt{\tau}}
		\int_{\sigma=0}^{1/2}
			\frac{\md \sigma}{\sqrt{\sigma}}
		+
		\frac{\sqrt{2}}{\sqrt{\tau}}
		\int_{\sigma=1/2}^{1}
			\frac{\tau}{\frac{1}{2\beta-1}+\tau-\tau\sigma}
		\,\md \sigma. \nonumber\\[2mm]
		&=
		\frac{2\sqrt{2}}{\sqrt{\tau}}
		+
		\frac{\sqrt{2}\log\Prth{1+\frac{2\beta-1}{2}\,\tau}}{\sqrt{\tau}}. \nonumber
	\end{align}
	Now using the relation \eqref{EQ_rel_between_tau_and_t} between the variables $\tau$ and $t$, namely
	\begin{equation*}
		\tau = 
		\frac{1}{2\beta-1} \crocHH{\prth{1+t}^{2\beta-1} - 1}
		\quad \iff \quad
		t = 
		\prtHH{1+\prth{2\beta-1}\tau}^{\frac{1}{2\beta-1}} - 1,
	\end{equation*}
	it can be shown that, for any $t>1$,
	$$
	\frac{1}{\sqrt{\tau}} \leq \frac{\sqrt{\prth{2\beta-1}/\prth{1-2^{1-2\beta}}}}{\prth{1+t}^{\beta-\frac{1}{2}}}
	\qquad
	\text{and}
	\qquad
	\log\Prth{1+\frac{2\beta-1}{2}\,\tau} \leq 
	\prth{2\beta-1}\log\prth{1+t}.
	$$
	As a result, there exists $k>0$, depending on $c$, $d$, $\beta$, $\vertii{v_{0}}{L^{\infty}}$ and $\max\prth{\text{supp}\prth{v_{0}}}$, such that
	\begin{equation}\label{EQ_almost_there_beta_ge_1_2}
		\Vertii{
			{w}\prth{\tau,y} - \tilde{w}\prth{\tau,y}
		}{L^{1}_{y}\prth{\mathbb{R}_{+}^{}}}
		\leq \frac{k \log\prth{1+t}}{\prth{1+t}^{\beta-\frac{1}{2}}},
		\qquad
		\forall t>1.
	\end{equation}
	Combining \eqref{EQ_almost_there_beta_ge_1_2} and the exponential convergence of $\tilde{w}\prth{\tau,\point}$ toward $W(y)=W\prth{0} e^{-\frac{c\beta}{d}y}$ (see \hyperref[TH_fundamental_sol_for_beta_1]{Theorem \ref*{TH_fundamental_sol_for_beta_1} \emph{(ii)}} with $c$ is replaced by $c\beta$),
	we reach, up to increasing $\ell$,
	\begin{equation}\label{EQ_almost_there_beta_ge_1_2_bis}
		\Vertii{
			{w}\prth{\tau,y} - W\prth{y}
		}{L^{1}_{y}\prth{\mathbb{R}_{+}^{}}}
		\leq \frac{\ell \log\prth{1+t}}{\prth{1+t}^{\beta-\frac{1}{2}}},
		\qquad
		\forall t>1.
	\end{equation}
	Now making the change of variable $y=\frac{b'\prth{t}}{c\beta}\,x$ in the left-hand-side of \eqref{EQ_almost_there_beta_ge_1_2_bis} and recalling that $v\prth{t,x} = \frac{b'\prth{t}}{c\beta}w\prth{\tau,y}$ (see \eqref{EQ_rescaling_super_sqrt}), we finally obtain
	$$
	\Vertii{
		{v}\prth{t,x} - \frac{b'\prth{t}}{c\beta} W\prtHHH{\frac{b'\prth{t}}{c\beta}\,x} 
	}{L^{1}_{x}\prth{\mathbb{R}_{+}^{}}}
	\leq \frac{\ell \log\prth{1+t}}{\prth{1+t}^{\frac{1}{2}}},
	\qquad
	\forall t>1,
	$$
	which gives the control \eqref{EQ_asymptotic_behavior_solutions_larger_half} announced in \hyperref[TH_asymptotic_behavior_solutions]{Theorem~\ref*{TH_asymptotic_behavior_solutions}~\emph{(iii)}} and concludes the proof.
\end{proof}

\paragraph*{Acknowledgement}
S.T. acknowledges support from the European Research Council (ERC) under the European Union’s Horizon 2020 research and innovation program (grant agreement No 865711). M.Z. acknowledges support from the ANR via the project ReaCh ANR-23-CE40-0023-01, and from the Occitanie region, the European Regional Development Fund (ERDF), and the French government, through the France 2030 project managed by the National Research Agency (ANR) ANR-22-EXES-0015.

The authors would like to thank everyone with whom they had the opportunity to discuss this paper, in particular
\href{http://math.univ-lyon1.fr/homes-www/lepoutre/}{Thomas Lepoutre},
whose insightful advice has marked significant milestones in advancing this work.


\bibliographystyle{siam}
\bibliography{biblio}

\end{document}